\documentclass[journal,twoside,web]{ieeecolor}
\usepackage{defs}

\begin{document}
\pdfminorversion=9 
\pdfcompresslevel=9
\pdfobjcompresslevel=50

%%% title %%%
\title{Sparse robust optimal control in continuous-time: a computationally viable approach}

\author{Siddhartha Ganguly, Ashwin Aravind, Souvik Das, Masaaki Nagahara, and Debasish Chatterjee
\thanks{The current manuscript was submitted on $8^{\text{th}}$ July, 2025 for review. This work was not supported by any funding agency.}
\thanks{Siddhartha Ganguly is with the Daniel Guggenheim School of Aerospace Engineering, Georgia Institute of Technology, Atlanta, USA (e-mail: \textsf{sganguly41@gatech.edu}).} 
\thanks{Ashwin Aravind is with the Fujitsu Research, Kawasaki, Japan (e-mail: \textsf{aravind.ashwin@fujitsu.com}).}
\thanks{Souvik Das is with the Department of Information Physics and Computing, The University of Tokyo, Japan (e-mail: \textsf{souvikd@g.ecc.u-tokyo.ac.jp}).}
\thanks{Masaaki Nagahara is with the Graduate School of Advanced Science and Engineering,
Hiroshima University, Japan (e-mail: \textsf{nagam@hiroshima-u.ac.jp}).}
\thanks{Debasish Chatterjee is with the Centre for Systems and Control, Indian Institute of Technology Bombay, India (e-mail: \textsf{dchatter@iitb.ac.in}). Debasish Chatterjee acknowledges support of the ANRF grant ANRF/ARGM/2025/001464/MTR from the Government of India.}
}

\maketitle

\begin{abstract}
This article presents a novel, numerically viable algorithm for solving sparse robust optimal control problems in continuous time. We consider a constrained linear noisy system governed by an ordinary differential equation (ODE), with an \(\lpL[1]\)-type objective function in line with the sparse optimal control literature. The resulting optimal control problem is shown to admit a semi-infinite programming (SIP) formulation. Building upon this insight, we develop a new framework that enables the computation of \emph{exact solutions} --- to our knowledge, the first such achievement in the context of sparse optimal control. We demonstrate that a finite and computationally viable convex optimization problem can be solved to recover, in a \emph{lossless manner}, both the optimal value and the corresponding optimizers of the original SIP, while also guaranteeing satisfaction of uncountably many constraints. We also show that the parameter-dependent noisy systems and the minimum attention problem fall into our framework and can be solved efficiently via our algorithm. The efficacy of our algorithm is illustrated through a benchmark numerical example.
\end{abstract}

\begin{IEEEkeywords}
Optimal control, sparse control, robust control, semi-infinite optimization
\end{IEEEkeywords}

\section{Introduction}
\label{sec:introduction}
Sparsity and the associated study of numerical techniques to promote sparsity have gained significant prominence across various scientific and technological fields interacting with signal processing \cite{ref:SparseBook:Unser,ref:SparseBook:Vidyasagar}, machine learning \cite{SparseBook:ML:StatLearning}, and statistics \cite{ref:SparseApp:Donoho}. These tools, often referred to as compressed sensing, compressive sampling, sparse representation, or sparse modeling, provide powerful tools for efficient handling of high-dimensional data. 

Sparsity in control theory has gained significant attention due to its ability to reduce control activity and enhance the ability for multitasking. From an application standpoint, sparsity-driven design has been widely adopted in networked control \cite{ref:SparseApp:NetworkPackets}, sensing \cite{ref:SparseApp:SensingFeedDisgn}, aerospace \cite{ref:SparseApp:Space}, and control of partial differential equations \cite{ref:SparseApp:PDEs:I, ref:SparseApp:PDE:II}. In constrained control systems, inducing sparsity by minimizing certain objective functions helps reduce control activity/attention by keeping actuators inactive for extended periods through constant-valued control inputs --- a principle known as \emph{sparse optimal control}. In the framework of deterministic linear systems, the minimization of the \(\lpL[0]\)-(semi)norm of the control input \(\cont(\cdot)\), which effectively reduces the duration over which \(\cont(\cdot)\) stays nonzero, is referred to as \emph{the maximum hands-off control} \cite{ref:MN:MaxHandsOff, ref:DC:MaxHandsOff:SCL}. This approach prioritizes sparsity by inducing prolonged intervals of zero control input, thereby optimizing control activity/attention while ensuring consistent constraint satisfaction. In the sparse control literature, the \(\lpL[0]\) objective is often relaxed to its convex surrogate, the \(\lpL[1]\) norm, under mild assumptions on the problem data \cite{ref:MN:MaxHandsOff, ref:DC:MaxHandsOff:SCL, ref:SidKen-26, ref:ItoIkeKashi-21,ref:ExaEvaPan-18}.

However, sparse control is typically not robust to uncertainties, and ensuring robustness/resilience to alterations in the system models (e.g., parameter mismatch, drift, and/or process noise) is an important task. The motivation for robustness in sparse control is clear: real-world dynamical models contain uncertainties including unmodeled dynamics, process noise, and disturbances etc., all of which can degrade the given system's performance if the uncertainties are not included in the control design at the synthesis stage. While several numerical methods exist for solving sparse optimal control problems in deterministic settings, techniques for inducing robustness along with sparsity have remained an open challenge especially from a numerical standpoint \cite{ref:PolSch-05,ref:PolakMayneSIPsInControl}. This is due to the need to handle a compact yet uncountable family of inequality constraints in a computationally viable way --- an issue that, to date, has not been adequately addressed in the literature and remains an open problem.
\subsubsection*{Our contributions}

Our efforts here are firmly directed at consistent state-action constraint satisfaction despite the uncertainties/noise while a sparsity-inducing \(\lpL[1]\) objective function of the control trajectories is minimized.\footnote{In this article, sparsity is \emph{directly} promoted by the \(\lpL[1]\) cost --- rather than introduced as a surrogate for an \(\lpL[0]\) objective --- and this \(\lpL[1]\)-induced sparsity is a standard and widely used modeling choice in the signal processing and control community~\cite{ref:MV:20}.}

\begin{enumerate}[leftmargin=*]
  
    \item \label{contrib:II} We introduce and structurally formulate a new class of robust continuous-time sparse optimal control problems for linear systems with process noise, parametric uncertainty, hard state, input, and terminal constraints together with an integral \(\lpL[1]\) cost. After parametrizing control and disturbance with a finite dictionary, we show that this class admits a convex semi-infinite programming (CSIP) representation.

   \item  More precisely, the ensuing finitely parametrized OCP continues to stipulate the satisfaction of an uncountable family of constraints. Standard direct methods for optimal control \cite{ref:betts-book, ref:SG:NR:DC:RB-22, ref:gansilcha24} rely on the discretization of the time horizon and enforcement of constraints only at the discretization points. As a result, they offer no guarantees that constraints are satisfied over the entire time interval --- let alone for all possible disturbance realizations. 
   We also draw attention to \cite{ref:elango2024successive, ref:fazlyab2016interior}, where the authors employ successive convexification and interior-point schemes to address continuous-time constraint satisfaction in the noise-free setting. In contrast, for the general noisy case, our approach builds on recent techniques from robust optimization developed in \cite{ref:DasAraCheCha-22, ref:ParCha-23} and the preliminary developments reported in \cite{ref:SD:SG:AM:DC:CtmpcCDC}. Along with these, we leverage the specific convexity properties of the cost and constraint sets to enforce the infinite family of constraints in a numerically viable manner, without introducing conservatism.
   More precisely, we establish guarantees of \emph{exact solutions} for the finitely parametrized constrained optimal control problem --- the optimal value and optimizers obtained via our technique match those of the finitely parametrized yet infinitely constrained problem. We validate our claims through two numerical examples, including one from the domain of minimum attention control \cite{ref:brockett1997minimum, MN-DN:20}. 

\end{enumerate}

\vspace{1mm}

\subsubsection*{Perspectives and questions}

In a certain sense, finite parametrization of the control trajectories (employed herein) reduces the overwhelming complexity associated with infinite-dimensional spaces to finite-dimensional ones, and is inevitable for general constrained problems \cite[Chapter 16, Section 1]{ref:AliTsa-21}, \cite[\S 1]{ref:minmax:ness:vinter}. In our context of linear dynamical systems, the control and noise trajectory parameters enter affinely in the dynamics. It is then natural to ask: are existing sparse robust signal processing techniques with supporting tools from robust optimization, e.g., \cite{ref:BenElGNem-09}, capable of addressing our constrained and uncertain optimal control problem? No, they are not, and primarily for two reasons: (a) The existing tools of sparse robust signal processing employ robust convex optimization problems admitting equivalent standard convex reformulations (typically via duality theory); such classes of problems require specific (typically affine) nature of the uncertainties that are both unnatural and inadequate in \emph{control} problems in contrast to signal processing applications. (b) Often conservative approximations are adopted in the process of employing the indicated type of robust convex optimization problems; in contrast, our techniques lead to exact solutions, and are, by definition, devoid of conservatism --- an important and desirable feature in robust control. A detailed discussion of this matter appears in \S\ref{sec:discussion}.

Finally, in the spirit of the three aspects identified in \cite[see \S VI, Conclusion, third paragraph]{ref:PolakMayneSIPsInControl} --- namely, the introduction of novel results from semi-infinite programming into control, the development of systematic solution techniques, and their practical implementation --- we instantiate these for a robust, sparsity-promoting continuous-time OCP with process and parametric uncertainties. Our main new contributions lie in developing a solution approach for the previously mentioned problem via a convex semi-infinite programming formulation and an exact finite-constraint surrogate, and in providing the associated \(\mathsf{SparseRob}\) solution architecture tailored to this problem class. In particular, we also follow the recommendation outlined in \cite[Page 3, second and third paragraph]{ref:minmax:ness:vinter} regarding the use of parametrization and semi-infinite optimization as a numerical approach for solving robust optimal control problems, building on similar motivations and methodological choices as those outlined in \cite{ref:minmax:ness:vinter}.

We employ standard notations; \(\N = \{1, 2, \ldots\}\) denotes the set of positive integers and \(\Nz = \N \cup \{0\}\). If \(A\) is a subset of \(\Rbb\), then the indicator function \(\indic{A}(\cdot)\) of \(A\) is defined by \(\indic{A}(x) = 1\) if \(x\in A\) and \(\indic{A}(x) = 0\) otherwise. Let us define the family of piecewise constant functions. Consider a fixed horizon \(\horizon > 0\) and a uniform \(K\)-partition: \(0=t_0 <\cdots<t_{K}\) and \( T-t_{K} \le h,\) with step size \(h =t_k-t_{k-1} > 0\), for \(k=1,\ldots,K\). Note that \(K = \left \lceil \frac{T}{h} \right \rceil\) for our case. The space of \(\Rbb\)-valued piecewise constant functions is defined by
\begin{align*}
\pwcfunc_h(\lcrc{0}{\horizon};\Rbb) \Let \hspace{-1mm} \left\{  y(\cdot)   \;\middle\vert\;  
\begin{array}{@{}l@{}}
y(\cdot) = y_{K}\indic{\lcrc{t_{K}}{\horizon}}(\cdot) \\\hspace{-1mm}+  \sum_{k=1}^{K} y_{k-1} \indic{\lcro{t_{k-1}}{t_k}}(\cdot),\, y_k\in\Rbb 
\end{array}
\right\}.
\end{align*}
\noindent By \(\norm{\cdot}_{\ell_1}\) and \(\norm{\cdot}_{\ell_{\infty}}\) we denote the standard Euclidean \(\ell_1\)- and the box-norms.

%%%%%%%%%%% Problem formulation %%%%%%%%%%%
\section{Problem formulation}\label{sec:prob_form}
%%%%%%%%%%%%
Let \(\dimst,\dimcon \in \N\) and fix a time horizon \(\tfin>0\). Let us consider the time-invariant continuous-time noisy linear system 
\begin{equation} \label{eq:sys}
\dot \st(t) = A\st(t) + B\cont(t) + \dist (t) \,\,\text{for all } t \in \lcrc{\tinit}{\tfin},
\end{equation}
%%%%%
with the following data:
%%%%%%%%%%%%%%%%%%%%%%%%%%
%%%%%%% OCP data %%%%%%%%%
%%%%%%%%%%%%%%%%%%%%%%%%%%
\begin{enumerate}[label=\textup{(\alph*)}, leftmargin=*, widest=b, align=left]
\item \label{ocp:data:1} the state trajectory \(t\mapsto x(t) \in \Rbb^d\) is absolutely continuous, the control and the disturbance trajectories are locally integrable maps \(t \mapsto \cont(t) \in \Rbb^{\dimcon}\) and \(t \mapsto \dist(t) \in \Rbb^{\dimw}\), respectively; \(A \in \Rbb^{\dimst \times \dimst}\) and \(B \in \Rbb^{\dimst \times \dimcon}\).
\item \label{ocp:data:2} For all \(t \in \lcrc{0}{\horizon}\), instantaneous constraints are imposed on the control actions, which are of the form
\begin{equation}
    \label{eq:inst_control}
    \cont(t) \in \admcont \Let \prod_{i=1}^{\dimcon} \admcont_i = \prod_{i=1}^{\dimcon} \lcrc{-\ubound^i}{\ubound^i} \subset \Rbb^{\dimu},
\end{equation}
where \(\ubound^i>0\) for each \(i=1,2,\ldots,\dimu\). Moreover, for all \(t \in \lcrc{0}{\horizon}\), the admissible disturbances are assumed to satisfy
\begin{equation}
    \label{eq:inst_distur}
    \dist(t) \in \admdist \Let \prod_{j=1}^{\dimx} \admdist_j = \prod_{j=1}^{\dimx} \lcrc{-\wbound^j}{\wbound^j} \subset \Rbb^{\dimx},
\end{equation}
with \(\wbound^j>0\) for each \(j=1,2,\ldots,\dimx\). For all \(t \in \lcrc{\tinit}{\tfin}\), the states will be required to take values in \(\admst \subset \Rbb^{\dimst}\), a compact and convex set containing \(0\in \Rbb^d\) in its interior. We remark that our subsequent analysis also extends to general compact and convex admissible sets \(\admcont \subset \Rbb^{\dimcon}\) and \(\admdist \subset \Rbb^{\dimx}\); the hyperrectangle structure in \eqref{eq:inst_control} and \eqref{eq:inst_distur} was assumed purely for simplicity.

\item \label{ocp:data:3} The initial state \(x(0) \Let \param \in \Rbb^d\) is given and the final state \(\st(\horizon)\) takes values in the terminal set \(\finset\), which is a compact and convex subset of \(\admst\) containing the origin \(0\in \Rbb^d\) in its interior. 
\end{enumerate}
\vspace{1mm}
%%%%%%%%%

%%%%%%%%%%%%%%%%%%%%
%%%%%%%%%%%%%%%%%%%%
%%%%%%%%%%%%%%%%%%%%
\textbf{The \(\lpL[1]\)-sparse robust optimal control:}
A widely used approach to introduce sparsity in control design is to employ the standard sparsity-promoting \(\lpL[1]\)-norm \cite{ref:MH-book-20}. Given a time horizon \(\horizon>0\), we consider the objective function
\begin{equation}\label{eq:cost_l1}
  \cont(\cdot) \mapsto  \objective(\cont(\cdot)) \Let  \int_{\tinit}^{\tfin} \norm{\cont(t)}_{\ell_1}\odif{t},
\end{equation}
and over the feasible control trajectories \(\cont(\cdot)\) we consider the sparse robust continuous-time OCP
\begin{equation}
	\label{eq:SR-OCP} 
\begin{aligned}
& \inf_{\cont(\cdot)} 	&&  \objective(\cont(\cdot))  \\
&  \sbjto		&&  \begin{cases}
\text{dynamics}\,\,\eqref{eq:sys},\,\st(\tinit)= \param,\, \st(\tfin) \in \finset, \text{ and}\\ 
\st(t) \in \admst, \, \cont(t) \in \admcont \; \text{for all }t \in 
\lcrc{0}{\horizon} \text{ and} \\
\text{all }w(t) \in \admdist.
\end{cases}
\end{aligned}
\end{equation}
The OCP \eqref{eq:SR-OCP} is an infinite-dimensional optimization problem and is numerically intractable in general. Indeed, the minimization is performed over an infinite-dimensional set of feasible controls, and all the constraints in \eqref{eq:SR-OCP} need to hold for all \((t,\dist(\cdot))\). We assume that the OCP \eqref{eq:SR-OCP} is feasible and admits a solution. To ensure numerical viability, we begin by reducing the infinite-dimensional problem to a finite-dimensional one by parametrizing the admissible sets of controls and disturbances --- a standard approach in numerical optimal control \cite{ref:minmax:ness:vinter}, but we shall retain the uncountable family of constraints in \eqref{eq:SR-OCP}.

\begin{rem}\label{rem:inf-sup is inf}
Note that since the objective \(\objective(\cont(\cdot))\) in \eqref{eq:SR-OCP} does not have a state-dependent term (which depends on the disturbance and uncertainties), the disturbance will not influence the value of the objective. Consequently, introducing a supremum over the disturbance trajectories in the cost would not change the optimization problem or the optimal value. Therefore, \(\inf_{u(\cdot)} \sup_{w(.)} \mathsf{J}(u(\cdot)) = \inf_{u(\cdot)} \mathsf{J}(u(\cdot))\)
holds.
\end{rem}

%%%%%%%%%%%%%%%%%%%%%%%%%%%%%%
%%%%% Tract. reform %%%%%%%%%%
%%%%%%%%%%%%%%%%%%%%%%%%%%%%%%
\subsection{Viable reformulation: control and disturbance parametrization}\label{subsec:pwc:disc}
We adopt the following parametrizations:
\begin{defn}\label{defn:discrete_admcon}
\longthmtitle{Parameterization for control trajectories}
Let \(N \in \N\) and consider a finite dictionary \(\dict \Let \aset[]{\psi_i(\cdot)}_{i=1}^{N} \subset  \pwcfunc_h(\lcrc{0}{\horizon};\Rbb)\) consisting of piecewise constant functions that are linearly independent and \(\max_{i} \max_{t}\abs{\dicC_i(t)} =1\). We define the parametrized set of (\(\text{i}\)-\({\text{th}}\) component) admissible control trajectories \(\admconfunc_{\dict}  \subset \pwcfunc_h(\lcrc{0}{\horizon};\Rbb)\) by 
\begin{align*}
    \admconfunc_{\dict} \Let \linspan \aset[\big]{ \psi_i:\lcrc{0}{\horizon} \lra \Rbb \suchthat i = 1, \ldots, N }.
\end{align*}  
Let \(t \mapsto \Reg(t) \Let \bigl(\reg_1(t)\; \reg_2(t)\;\ldots \;\reg_N(t) \bigr) \in \Rbb^{N}\). Then, the control signal is written as
\begin{align}\label{eq:disct:control}
\lcrc{\tinit}{\tfin} \ni t \mapsto \cont^{\dict}_i(t) = \sum_{j=1}^{N} \Param_{i, j} \psi_j(t) = \inprod{\Param_i}{\Reg(t)}
\end{align}
for each \(i \in \aset[]{1,\ldots,\dimcon}\), where \(\Param_{i} \Let (\Param_{i,1},\ldots,\Param_{i,N}) \in \Rbb^{N}\) are the control parameters. 
\end{defn}
Defining \( \Param \Let \bigl(\Param_{i,j}\bigr)_{i=1,j=1}^{\dimcon,N}\in\Rbb^{\dimcon \times N}\), the parametrized control trajectory in \eqref{eq:disct:control} can be written compactly as \(
t \mapsto \cont^{\dict}(t) = \Param \Reg(t)
\).
\begin{defn}
\label{defn:discrete_admdist}
\longthmtitle{Parameterization for disturbance trajectories}
Let \(M\in \N\) and consider a finite dictionary \(\mathcal{D}_w \Let \aset[]{\dicD_j (\cdot)}_{j =1}^M \subset \pwcfunc_h(\lcrc{0}{\horizon};\Rbb)\) consisting of piecewise constant functions that are linearly independent and \(\max_{j} \max_{t} \abs{\dicD_j(t)} =1\). We define the parametrized set of (\(\text{j}\)-\({\text{th}}\) component) admissible disturbances \(\admdistfunc_{\dict} \subset \pwcfunc_h(\lcrc{0}{\horizon};\Rbb)\) by 
\begin{align*}
    \admdistfunc_{\dict} \Let \linspan \aset[\big]{ \dicD_j:\lcrc{0}{\horizon} \lra \Rbb \suchthat  j = 1, \ldots, M},
\end{align*}
yielding the following \emph{finite} parametrization for each component of the admissible disturbance trajectories
\begin{align}\label{eq:disct:dist}
    \lcrc{\tinit}{\tfin} \ni t \mapsto w^{\dict}_i(t)= \sum_{j=1}^{M}\Paramw_{i,j} \dicD_j(t) = \inprod{\Paramw_i}{\RegD(t)}, 
\end{align}
where \(i=1,\ldots,\dimx\), \(\Paramw_i \Let \bigl(\Paramw_{i,1},\ldots, \Paramw_{i,M}\bigr) \in \Rbb^{M}\), and \(t \mapsto \RegD(t) \Let \bigl(\dicD_1(t),\ldots,\dicD_{M}(t)\bigr) \in \Rbb^{M}\).
\end{defn} 
We represent the admissible disturbance trajectories compactly as \(\lcrc{0}{\horizon} \ni t \mapsto \dist^{\dict} (t) = \Paramw  \RegD(t)\) with \(\Paramw \Let (\Paramw_1 \, \Paramw_2 \, \cdots \, \Paramw_{\dimx}) \in \Rbb^{\dimx \times M}\) and \(\Paramw = \bigl(\Paramw_{i,j} \bigr)_{i=1,j=1}^{d,M}\). 
%%%%% remark on PWC parametrization %%%%%%%
\begin{rem}\label{rem:pwc}
\longthmtitle{Choice of generating functions}
The selection of the generating functions in Definitions \ref{defn:discrete_admcon} and \ref{defn:discrete_admdist}, and the choice of \(N\) should be guided by the specific goals and sampling frequency of the actuator hardware, especially when the time horizon is prescribed. We further emphasize that \(N\) is not entirely a free numerical parameter; rather, it is tied to the temporal resolution at which the control input can be implemented in the application at hand.
In particular, a piecewise constant dictionary is often ideal for digital implementations and sparse-control scenarios, since its simple and quantized structure aligns naturally with fixed‐precision hardware and enables efficient representation when only a few basis elements are active at any given time. Moreover, since admissible control (and disturbance) trajectories are locally integrable and the family of piecewise constant functions is dense in \(\lpL[1](\lcrc{0}{\horizon};\admcont)\) (and \(\lpL[1](\lcrc{0}{\horizon};\admdist)\)) \cite[Chapter~6]{ref:FolReal-99}, it follows that, for sufficiently large \(N\) (and \(M\)), any such trajectory admits an arbitrarily close approximation (in the \(\lpL[1]\)-norm) by elements of \(\mathsf{PC}_h(\lcrc{0}{\horizon};\Rbb)^{\dimcon}\) (and \(\mathsf{PC}_h(\lcrc{0}{\horizon};\Rbb)^{\dimst}\)). Similar arguments can also be made for the disturbance trajectories.
\end{rem}

%%%%%%%%%%%%%%%%%%%%%%%%%%%%%%%%%%%%%%
%%%%%%% Prop: feasible sets %%%%%%%%%%
%%%%%%%%%%%%%%%%%%%%%%%%%%%%%%%%%%%%%%
We now analyze structural properties of the admissible sets for control and disturbance parameters in the following proposition.
\begin{prop}\label{prop:comp:conv:adparam:sets}
\longthmtitle{Properties of admissible sets for control and disturbance parameters}
Consider the OCP \eqref{eq:SR-OCP} and the parametrizations \eqref{eq:disct:control} and \eqref{eq:disct:dist}. Then
\begin{enumerate}[label=\textup{(S-\alph*)}, leftmargin=*]
\item \label{eq:adm:con:set} the set of admissible control parameters corresponding to the representation \eqref{eq:disct:control}
%%%%% control feasible set %%%%%%
\begin{equation}\label{eq:ad_cont_traj}
\adparam \Let   \left\{\Param \in \Rbb^{\dimcon \times N} \, \middle\vert 
\begin{array}{@{}l@{}}
     \, \Param \Reg(t) \in \admcont \; \text{for all }t \in \lcrc{\tinit}{\tfin}
        \end{array}
        \right\},\nn
\end{equation}
\item \label{adm:dist:set} and the set of admissible disturbance parameters corresponding to the representation \eqref{eq:disct:dist} 
%%%%% disturb feasible set %%%%%%
\begin{align}
    \adparamD \Let \aset[\Big]{\Paramw \in \Rbb^{d\times M} \suchthat \Paramw \RegD(t)  \in \admdist \; \text{for all }t \in \lcrc{\tinit}{\tfin}}\nn
\end{align}
\end{enumerate}
are compact and convex. 
\end{prop}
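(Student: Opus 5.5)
Both sets are treated by the same argument, so we describe it for \(\adparam\); the case of \(\adparamD\) follows by replacing \(\Reg\), \(N\), \(\admcont\) with \(\RegD\), \(M\), \(\admdist\).

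The plan is to write \(\adparam\) as an intersection of closed convex sets and then show boundedness via the linear independence of the dictionary. For each fixed \(t\in\lcrc{0}{\horizon}\), consider the linear map \(L_t:\Rbb^{\dimcon\times N}\to\Rbb^{\dimcon}\), \(L_t(\Param)\Let\Param\Reg(t)\). Then
\[
\adparam=\bigcap_{t\in\lcrc{0}{\horizon}} L_t^{-1}(\admcont).
\]
Since \(\admcont\) is a closed convex hyperrectangle and \(L_t\) is linear (hence continuous), each preimage \(L_t^{-1}(\admcont)\) is closed and convex. An arbitrary intersection of closed convex sets is closed and convex, so \(\adparam\) is closed and convex. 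This argument uses no regularity of \(\Reg\) in \(t\), which matters because \(\Reg\) is only piecewise constant.

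Compactness reduces, by Heine--Borel, to boundedness, and this is the step that needs an actual argument. Every \(\psi_j\) lies in \(\pwcfunc_h(\lcrc{0}{\horizon};\Rbb)\), so \(\Reg(\cdot)\) takes only finitely many values \(\Reg(t_0),\ldots,\Reg(t_K)\), one on each cell of the partition. Define the matrix
\[
\Psi\Let\bigl(\Reg(t_0)\;\cdots\;\Reg(t_K)\bigr)\in\Rbb^{N\times(K+1)}.
\]
Linear independence of \(\psi_1,\ldots,\psi_N\) as functions says: if \(c^\top\Reg(t)=0\) for all \(t\), then \(c=0\). Since \(\Reg\) takes only the values \(\Reg(t_k)\), this is equivalent to \(c^\top\Psi=0\Rightarrow c=0\). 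Hence \(\Psi\) has full row rank \(N\), and the map \(\Param\mapsto\Param\Psi\) is injective on \(\Rbb^{\dimcon\times N}\).

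Boundedness then follows. For \(\Param\in\adparam\), every column \(\Param\Reg(t_k)\) of \(\Param\Psi\) lies in \(\admcont\), so \(\norm{\Param\Psi}\le C\) for some constant \(C\) depending only on the bounds \(\ubound^i\) and on \(K\). Right-multiplying by a right inverse \(\Psi^\dagger\) (for instance \(\Psi^\top(\Psi\Psi^\top)^{-1}\)) gives \(\Param=(\Param\Psi)\Psi^\dagger\), and therefore \(\norm{\Param}\le C\norm{\Psi^\dagger}\). So \(\adparam\) is bounded, and being closed, it is compact.

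The only genuine obstacle is this boundedness step. The key observation is that linear independence of the piecewise constant dictionary is equivalent to full row rank of the finite sample matrix \(\Psi\). As the paper notes, the same argument works for any compact convex \(\admcont\) and \(\admdist\), not only for hyperrectangles. Nonemptiness is immediate, since \(0\in\adparam\) and \(0\in\adparamD\).
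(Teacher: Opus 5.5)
Your proof is correct, and it reaches boundedness by a different and more general route than the paper.

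The closedness and convexity step is the same in substance as the paper's: preimages of closed convex sets under continuous linear maps. Your description \(\adparam=\bigcap_t L_t^{-1}(\admcont)\) is cleaner, though. The paper detours through the H\"older condition \(\norm{\Param_i}_{\ell_1}\le\ubound^i\), which only carves out a subset of \(\adparam\), not the set itself.

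For boundedness, the paper evaluates at a time \(t\) in a single cell and reads off \(\abs{\Param_{i,j}}\le\ubound^i\). This tacitly assumes the dictionary consists of the cell indicators, so that \(\Reg(t)\) is a standard basis vector on each cell. Definition~\ref{defn:discrete_admcon} permits any linearly independent piecewise constant dictionary, and then that coordinatewise bound can fail. For example, take \(\psi_1\equiv 1\) and \(\psi_2\) the indicator of \(\lcro{t_0}{t_1}\). Then the choice \(\Param_{i,1}=\ubound^i\), \(\Param_{i,2}=-2\ubound^i\) (other rows zero) is admissible, yet \(\abs{\Param_{i,2}}=2\ubound^i\).

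Your full-row-rank argument on the sample matrix handles the general case. It also makes explicit where linear independence enters, which the paper never invokes even though it is necessary. If there were a nonzero \(c\) with \(c^\top\Reg(\cdot)\equiv 0\), then \(\Param+\lambda e_i c^\top\), with \(e_i\in\Rbb^{\dimcon}\), would stay in \(\adparam\) for every \(\lambda\in\Rbb\).

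What the paper's route buys, in the indicator case, is the explicit box \(\abs{\Param_{i,j}}\le\ubound^i\). Your bound is only qualitative: it scales with the norm of a right inverse of the sample matrix, that is, with the conditioning of the dictionary.
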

% A proof of Proposition \ref{prop:comp:conv:adparam:sets} is in Appendix~\ref{appendix:A}.
%%%%%%%%%%%%
\begin{proof}
Recall from Definition \ref{defn:discrete_admcon} and \ref{eq:disct:control} that a finite parametrization of the control actions leads to the expression
\(\lcrc{0}{\horizon}\ni t\mapsto\cont_i^{\dict}(t) = \inprod{\Param_i}{\Reg(t)} \in \admcont_i\) for \(i=1,\ldots,m\). Since \(\admcont_i \Let \lcrc{-\ubound^i}{\ubound^i}\), we write \(\inprod{\Param_i}{\Reg(t)}  \in \lcrc{-\ubound^i}{\ubound^i}\). This is equivalent to the inequality
\begin{equation}\label{eq:contraints}
    \abs{\inprod{\Param_i}{\Reg(t)}} \leq \ubound^i \text{ for all }t \in \lcrc{0}{\horizon},
\end{equation}
and H\"older's inequality \cite[p.\ 128]{ref:VAZZ-16} gives us \(\abs{\inprod{\Param_i}{\Reg(t)}} \leq \norm{\Param_i}_{\ell_1} \norm{\Reg(t)}_{\ell_\infty}\) for all \(t\in\lcrc{0}{\horizon}\). Therefore, since \(\max_{i} \max_{t}\abs{\dicC_i(t)} =1\) by Definition \ref{defn:discrete_admcon}, stipulating that \(\norm{\Param_i}_{\ell_1}  \leq \ubound^i \text{ for }t \in \lcrc{0}{\horizon}\) and \(i=1,\ldots,m,\) ensures that \eqref{eq:contraints} is satisfied for all \(t \in \lcrc{0}{\horizon}\). Observe that:
\begin{enumerate}[label=\textup{P.\alph*)}, leftmargin=*]
    \item The mapping \(\Param \mapsto \norm{\Param_i}_{\ell_1} - \ubound^i = \sum_{j=1}^{N}\abs{\Param_{i,j}} - \ubound^i\) is both convex and continuous. Now, the convexity of \(\adparam\) is immediate because the preimage of a convex set under a linear map is convex.
    
    \item Since \(\admcont\) is compact (and therefore closed), the set \(\adparam\) is also closed. This is because \(\adparam\) is the preimage of a closed set under a continuous map.

    \item Exploiting the structure of \(\admcont\), for \(i=1,\ldots,\dimcon\), we have \(\Param_i \Reg(t) \in \lcrc{-\ubound^i}{\ubound^i}\) for all \(t \in \lcrc{0}{\horizon}\). For \(j \in \aset[]{1,\ldots,N}\), choose \(t \in \lcro{t_j}{t_{j+1}}\), then due to the piecewise constant nature of the functions \(\psi_i(\cdot)\) we have \(\abs{\Param_{i,j}} \le \ubound^i\). The boundedness of \(\adparam\) follows immediately. 

\end{enumerate}
%%%%%
\vspace{1mm}
By the Heine–Borel theorem \cite[Theorem 2.41, p.\ 40]{ref:Rud-Analysis}, \(\adparam\) is compact. Convexity and compactness of the set \(\adparamD\) follow similarly. Our proof is complete.
\end{proof}

Using the parametrizations in \eqref{eq:disct:control} and \eqref{eq:disct:dist}, we express the solution to \eqref{eq:sys} as
%%%%
\begin{align}\label{eq:p_sol}
\lcrc{0}{\horizon} \ni t \mapsto \st(t; \param, \Param, \Paramw) &\Let \epower{At} \overline{x} + \int_{0}^{t} \epower{A(t-\tau)} B \Param \Reg(\tau) \, \mathrm{d}\tau \notag \\ 
&\quad + \int_{0}^{t} \epower{A(t-\tau)} \Paramw \RegD(\tau) \, \mathrm{d}\tau.
\end{align}
%%%%%%%%%%%
We occasionally denote the map in \eqref{eq:p_sol} by \(t \mapsto \st(t)\), omitting the parameters \(\param\), \(\Param\), and \(\Paramw\) for notational simplicity. Moreover, observe that the
generating functions \(\psi_j(\cdot)\), for \(j=1,\ldots,N\), are piecewise constant (see Definition \ref{defn:discrete_admcon}) and consequently we have 
\begin{align}    
  \Rbb^{N} \ni  \Reg(t) = \begin{cases}
        \Reg_{k-1} \quad &\text{if }t\in \lcro{t_{k-1}}{t_k},\\
        \Reg_K & \text{if }t\in \lcrc{t_K}{T},
    \end{cases}
\end{align}
where \(\Reg_{k-1}\) for \(k=1,\ldots,K-1\) and \(\Reg_{K}\) are all vectors in \(\Rbb^{N}\). The cost in \eqref{eq:SR-OCP} can therefore, be further simplified as
\begin{align}\label{eq:simplified_cost}
    \objective\bigl(\Param\Reg(\cdot)\bigr) &\Let \int_{0}^{\horizon} \norm{\Param \Reg(t)}_{\ell_1} \odif{t} \nn\\
    &= \sum_{k=1}^{K} \int_{t_{k-1}}^{t_k} \norm{\Param \Reg_{k-1}}_{\ell_1}\odif{t} + \int_{t_K}^{\horizon}\norm{\Param\Reg_{K}}_{\ell_1}\odif{t}  \nn \\
    &  =\sum_{k=1}^{K}h\norm{\Param\Reg_{k-1}}_{\ell_1}+ (\horizon-t_K) \norm{\Param\Reg_K}_{\ell_1},
\end{align}
where \(h=t_k-t_{k-1}>0\).
%%%%%%%%%%%%%%
Putting everything together in \eqref{eq:SR-OCP}, we obtain the following optimization problem:
 \begin{equation}
\label{eq:SR:pre_SIP} 
\begin{aligned}
& \hspace{-4mm}\inf_{\Param \in \adparam} 	&& \sum_{k=1}^{K}h\norm{\Param\Reg_{k-1}}_{\ell_1}+ (\horizon-t_K) \norm{\Param\Reg_K}_{\ell_1} \\
&  \hspace{-4mm}\sbjto		&&  \begin{cases}
\st(0)=\xz,\,\st(\tfin) \in \finset,\,\st(t) \in \admst, \text{ and}\\
\Param\Reg(t)\in \admcont \text{ for all }(t,\Paramw) \in \lcrc{\tinit}{\tfin} \times \adparamD.
\end{cases}
\end{aligned}
\end{equation}   
Although \eqref{eq:SR:pre_SIP} is a finite-dimensional parametrized avatar of \eqref{eq:SR-OCP}, it is still a semi-infinite program consisting of an uncountable family of constraints indexed by \((t,\Paramw) \in \lcrc{\tinit}{\tfin} \times \adparamD\). For simplicity of notation, we continue to use \(\objective(\Param\Reg(\cdot))\) to denote the objective function, instead of the full expression in \eqref{eq:simplified_cost}.

\section{Main results}
\label{sec:main_result}
%%%%%%%%
This section focuses on solving  \eqref{eq:SR:pre_SIP}, which turns out to be a convex semi-infinite program (CSIP) in the decision variable \(\Param \in \adparam\). Let us first revisit key points outlined in the introduction before advancing to our theoretical results. Despite the finite parametrization in \S\ref{sec:prob_form}, the OCP \eqref{eq:SR:pre_SIP} continues to stipulate the satisfaction of an uncountable family of state and control constraints. From a numerical standpoint, this aspect remains highly challenging, and to the best of our knowledge, no existing method has provably/effectively addressed this issue. Nonetheless, our results operate entirely within a finitary computational framework, yet guarantee exact solutions and satisfaction of an uncountable family of constraints robustly. To achieve this, we construct a family of regularized problems for the OCP \eqref{eq:SR:pre_SIP} and develop an algorithmic framework that leverages recent advances in the numerical solution of CSIPs from \cite{ref:DasAraCheCha-22,ref:ParCha-23}, enabling a computationally viable approach.

\smallskip

Let \(\regularizer:\Rbb^{\dimcon \times N} \lra \Rbb\) be a continuous, positive, and strictly convex function, and let \(\eps>0\) be a parameter. Consider the \((\eps,\regularizer)\)-regularized version of the OCP \eqref{eq:SR:pre_SIP}
\begin{align}\label{eq:regularized:SIP}
 \inf_{\Param \in \adparam} \hspace{0mm}\left\{\objective\bigl(\Param\Reg(\cdot)\bigr) + \eps \regularizer(\Param) \middle\vert  
\begin{array}{@{}l@{}}
\text{constraints in }\eqref{eq:SR:pre_SIP} \text{ holds}  \\ \forall(t,\Paramw) \in \lcrc{\tinit}{\tfin} \times \adparamD
\end{array}
\right\}.
\end{align}
For a fixed \(\xz\) and \(\regupara>0\), define the optimal value of \eqref{eq:regularized:SIP} by \(\valuefunc(\xz,\regupara)\). Fix \(\eps > 0\); we denote the set of feasible initial states \(\fsblset\) for \eqref{eq:regularized:SIP} by
\begin{equation}\label{eq:feas_set}
    \fsblset \Let \aset[\big]{\dummyx \in \Rbb^{\dimst} \suchthat \valuefunc({\dummyx},\eps) < + \infty}.
\end{equation}
First, we show that \eqref{eq:regularized:SIP} and its feasible set possess some good qualitative properties. 
\begin{prop}\label{prop:unique solution}
\longthmtitle{Existence and uniqueness of solutions for~\eqref{eq:SR:pre_SIP} and~\eqref{eq:regularized:SIP}}
Assume that the feasible set of \eqref{eq:SR:pre_SIP} defined by
\begin{align}
\mathcal{F}_{O} \Let \left\{ \Param \in \adparam \;\middle\vert\;  
\begin{array}{@{}l@{}}
\st(0)=\xz,\,\st(\tfin) \in \finset,\,\st(t) \in \admst\\
\text{ for all }(t,\Paramw) \in \lcrc{\tinit}{\tfin} \times \adparamD
\end{array}
\right\}\nn
\end{align}
is nonempty. Then, the program \eqref{eq:SR:pre_SIP} admits a solution, and its regularized counterpart \eqref{eq:regularized:SIP} admits a unique solution for every \(\eps>0\).
\end{prop}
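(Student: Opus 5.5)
The plan is to argue by Weierstrass' extreme value theorem for existence, and by strict convexity for uniqueness. The key preliminary step is to show that the feasible set \(\mathcal{F}_O\) is a nonempty, compact and convex subset of \(\Rbb^{\dimcon\times N}\).

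By Proposition~\ref{prop:comp:conv:adparam:sets}, \(\adparam\) is compact and convex, so it suffices to show that the remaining constraints cut out a closed convex subset. From \eqref{eq:p_sol}, for each fixed \((t,\Paramw)\in\lcrc{0}{\horizon}\times\adparamD\) the map \(\Param\mapsto \st(t;\param,\Param,\Paramw)\) is affine, and hence continuous. The set \(\{\Param : \st(t;\param,\Param,\Paramw)\in\admst\}\) is then the preimage of the closed convex set \(\admst\) under an affine map, so it is closed and convex. The terminal constraint set \(\{\Param : \st(\horizon;\param,\Param,\Paramw)\in\finset\}\) is closed and convex for each \(\Paramw\) by the same argument. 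The initial condition \(\st(0)=\param\) holds automatically by \eqref{eq:p_sol}.

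Therefore \(\mathcal{F}_O\) is the intersection of \(\adparam\) with an arbitrary (uncountable) family of closed convex sets indexed by \((t,\Paramw)\). Arbitrary intersections preserve both closedness and convexity, so \(\mathcal{F}_O\) is closed and convex. As a closed subset of the compact set \(\adparam\), it is also compact, and it is nonempty by hypothesis.

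Next I treat the objective. By \eqref{eq:simplified_cost},
\[
\Param\mapsto\objective(\Param\Reg(\cdot))=\sum_{k=1}^{K} h\norm{\Param\Reg_{k-1}}_{\ell_1}+(\horizon-t_K)\norm{\Param\Reg_K}_{\ell_1}.
\]
This is a nonnegative combination of norms composed with linear maps, so it is continuous and convex. Weierstrass' theorem applied on the nonempty compact set \(\mathcal{F}_O\) then gives a minimizer of \eqref{eq:SR:pre_SIP}.

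For \eqref{eq:regularized:SIP}, the feasible set is the same set \(\mathcal{F}_O\). The objective \(\objective(\Param\Reg(\cdot))+\eps\regularizer(\Param)\) is continuous, so a minimizer again exists. It is also strictly convex, being the sum of a convex function and \(\eps>0\) times a strictly convex function.

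Uniqueness follows by the standard midpoint argument. Suppose \(\Param^1\neq\Param^2\) are both minimizers with common value \(V\). By convexity of \(\mathcal{F}_O\), the midpoint \(\tfrac12(\Param^1+\Param^2)\) is feasible. Strict convexity gives it an objective value strictly less than \(\tfrac12V+\tfrac12V=V\), which contradicts optimality.

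The step I expect to need the most care is the closedness of \(\mathcal{F}_O\) under the uncountable family of constraints, together with the joint handling of the disturbance. The key point is that for each fixed \(\Paramw\) the constraint is affine in \(\Param\), so no uniformity in \(t\) or \(\Paramw\) is needed: closedness survives arbitrary intersections. I will also remark that the constraint \(\Param\Reg(t)\in\admcont\) in \eqref{eq:SR:pre_SIP} is already encoded in \(\adparam\), so it adds nothing further.
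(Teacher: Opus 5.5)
Your proposal is correct and follows essentially the same route as the paper. Both arguments obtain closedness and convexity of $\mathcal{F}_O$ as an arbitrary intersection of preimages of closed convex sets under affine maps, get compactness from the admissible control-parameter set of Proposition~\ref{prop:comp:conv:adparam:sets}, use Weierstrass for existence, and use strict convexity of the regularized objective for uniqueness. The only difference is that you spell out the midpoint argument for uniqueness, which the paper leaves implicit.
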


%%%%%%%%% Proof of Proposition III.1 %%%%%%%%%%%%%%
\begin{proof}
Recall the expression 
\begin{equation}
\adparam \Let   \left\{\Param \in \Rbb^{\dimcon \times N} \, \middle\vert 
\begin{array}{@{}l@{}}
     \, \Param \Reg(t) \in \admcont \; \text{for all }t \in \lcrc{\tinit}{\tfin}
        \end{array}
        \right\},\nn
\end{equation}
and consider the feasible set of \eqref{eq:SR:pre_SIP}
\begin{align}\label{eq:feas:set}
\mathcal{F}_{O} \Let \left\{ \Param \in \adparam \;\middle\vert\;  
\begin{array}{@{}l@{}}
\st(0)=\xz,\,\st(\tfin) \in \finset,\,\st(t) \in \admst\\
\text{ for all }(t,\Paramw) \in \lcrc{\tinit}{\tfin} \times \adparamD
\end{array}
\right\}. 
\end{align}
We have the following steps:
\begin{enumerate}[label=\textup{(\alph*)}, leftmargin=*]
\item \label{prop:uq:sol:I} Recall that \(\fsblset\) denotes the set of feasible initial states, as defined in \eqref{eq:feas_set}. Fix \(\xz \in \fsblset\) and \((t,\Paramw) \in \lcrc{0}{\horizon} \times \adparamD\), and define
\begin{align*}
\hspace{-8mm} \auxset^{t,\Paramw}(\param)  \Let \left\{ \Param \in \adparam \;\middle\vert\;  
\begin{array}{@{}l@{}}
       \st(0)=\xz,\,\st(\tfin) \in \finset,\,\st(t) \in \admst
\end{array}
\right\}; 
\end{align*}
notice that \(\Paramw\) enters the constraint \(\st(t) \in \admst\) through \eqref{eq:p_sol}. The sets \(\admst\) and \(\finset\) are compact and convex, and the mapping \(\adparam\ni \Param \mapsto \st(s;\param,\Param,\Paramw)\) is continuous and affine. Consequently, 
\begin{align}
    \auxset^{t,\Paramw}(\param) = \st(s;\param,\cdot,\Paramw)\inverse\bigl(\aset[]{\admst}\bigr) \cap \st(\horizon;\param,\cdot,\Paramw)\inverse\bigl(\aset[]{\finset}\bigr), \nn
\end{align}
is closed and convex, being the preimage of closed and convex sets under continuous affine mappings. Then, 
\begin{align}\label{eq:state:feasible:set}
    \auxset(\param) \Let \bigcap_{(t,\Paramw) \in \lcrc{0}{\horizon} \times \adparamD} \auxset^{t,\Paramw}(\param)
\end{align}
is closed and convex. The feasible set \(\mathcal{F}_{O}\) is then closed and convex since it is the intersection \(\mathcal{F}_{O} = \auxset(\param) \cap \adparam\). From Proposition \ref{prop:comp:conv:adparam:sets}, we also know that \(\adparam\) is compact, and hence \(\mathcal{F}_{O}\) is compact, being a closed subset of the compact set \(\adparam\) \cite[Theorem 2.35, p.\ 37]{ref:Rud-Analysis}.

\item \label{prop:uq:sol:II} Recall the expression of the objective function 
\begin{align}
    \objective\bigl(\Param\Reg(\cdot)\bigr) &=\sum_{k=1}^{K}h\norm{\Param\Reg_{k-1}}_{\ell_1}+ (\horizon-t_K) \norm{\Param\Reg_K}_{\ell_1}.\nn
\end{align}
In view of the above, the map \(\Rbb^{m\times N} \ni \Param \mapsto \objective\bigl(\Param\Reg(\cdot)\bigr) \Let  \int_{0}^{\horizon} \norm{\Param\Reg(t)}_{\ell_1} \odif{t} \in \Rbb\) is continuous and convex.
Since \(\Param \mapsto \norm{\Param\Reg_k}_{\ell_1}\) is continuous and convex\footnote{Note that if \(x\mapsto f_1(x) \Let \norm{x}\) and \(\mathsf{M}\) is a finite-dimensional linear operator, then \(x \mapsto f_2(x) \Let f_1(\mathsf{M}\Param)\) is convex, where \(\norm{\cdot}\) is any Euclidean norm.},
the mapping \(\Rbb^{m\times N} \ni \Param \mapsto \int_{0}^{\horizon} \norm{\Param\Reg(t)}_{\ell_1} \odif{t} \in \Rbb\) is a finite sum of continuous and convex functions, implying that it is continuous and convex. 
\end{enumerate}
\vspace{2mm}
Thus, the existence of solution of the CSIP \eqref{eq:SR:pre_SIP} follows immediately from Weierstrass theorem \cite[Chapter \(9\), p. \(33\)]{ref:VAZZ-16}.

For the regularized SIP \eqref{eq:regularized:SIP} fix \(\eps>0\) and notice that the objective 
\begin{align}
    \Param \mapsto \objective \bigl(\Param\Reg(\cdot)\bigr) + \eps \regularizer(\Param) \nn
\end{align}
is continuous and strictly convex. Indeed, continuity and convexity of the map \(\Param \mapsto \objective(\Param\Reg(\cdot))\) follow from the preceding arguments and by the choice that \(\Param \mapsto\regularizer(\Param)\) is continuous and strictly convex. Thus \(\Param \mapsto \objective \bigl(\Param\Reg(\cdot)\bigr) + \eps \regularizer(\Param)\) is strictly convex and the feasible set \(\mathcal{F}_O\) is compact and convex. The existence of a unique minimizer follows immediately. 
\end{proof}

% A proof of Proposition \ref{prop:unique solution} is in Appendix~\ref{appendix:A}.

\begin{rem}\label{rem:on:regularizing}
\longthmtitle{Importance of regularized problem~\eqref{eq:regularized:SIP}}
It is worth noting that if the sole objective is the lossless extraction of the optimal value, the regularization of the OCP \eqref{eq:SR:pre_SIP} --- that is, the construction of the regularized problem \eqref{eq:regularized:SIP} --- is, in fact, unnecessary. The architecture \(\sprob(\cdot,\cdot)\) presented ahead in \S\ref{subsec:sparserob}, with \(\eps=0\), can be directly applied to \eqref{eq:SR:pre_SIP} to recover the exact optimal value. However, we emphasize that in this unregularized setting, the recovery of optimizers --- essential for the synthesis of sparse control actions --- is not guaranteed, owing to the lack of strict convexity in the objective function. To address this, we introduce the regularized CSIP \eqref{eq:regularized:SIP}, thereby ensuring a strictly convex objective function that facilitates the lossless extraction of optimizers as well. A numerical illustration of this process can be seen Fig.~\ref{fig:msap} in \S\ref{sec:NumExp}.
\end{rem}
%%%
\subsection{Technical results}\label{sec:tech:results}
We fix the notation 
\begin{align}\label{eq:dvar}
\dvar \Let \dimcon N = \text{dim. of the decision space of \eqref{eq:SR:pre_SIP}},
\end{align}
and define the sequences \(\tseq \Let (t^1,\ldots,t^{\dvar}) \in \lcrc{\tinit}{\tfin}^{\dvar},\, \bseq \Let \bigl( \Paramw^1,\ldots, \Paramw^{\dvar}\bigr) \in \adparamD^{\dvar}.\) Fix \(\eps>0\) and \(\xz \in \fsblset\), and define the relaxed version of \eqref{eq:regularized:SIP} by \[\lcrc{\tinit}{\tfin}^{\dvar} \times \adparamD^{\dvar} \ni  (\tseq,\bseq) \mapsto \gfunc(\xz,\eps;\tseq,\bseq)\in \Rbb,\] 
where \(\gfunc(\xz,\eps;\cdot,\cdot)\) is given by 
\begin{equation}
    \label{eq:g_func}
    \begin{aligned}
        &\hspace{-3mm}\gfunc(\xz, \eps;\tseq,\bseq) && \Let\\  &\hspace{-3mm}\inf_{\Param\in \adparam}	&& \hspace{-13mm} \objective\bigl(\Param\Reg(\cdot)\bigr) + \eps \regularizer(\Param) \\
        &  \hspace{-2mm}\sbjto &&  \hspace{-13mm}\begin{cases}
        \st(0) = \xz, \st(t^i) \in \admst,\,\st(\tfin) \in \finset,\\
        \Param\Reg(t^i) \in \admcont\text{ where } i=1,\ldots,\dvar \\\text{for }(\tseq,\bseq) \in \lcrc{\tinit}{\tfin}^{\dvar} \times \adparamD^{\dvar}.
        \end{cases}
    \end{aligned}
\end{equation}
Recall from the expression \eqref{eq:p_sol} that \(\st(t^i) = \st(t^i;\param,\Param,\Paramw^i)\) for each \(i \in \aset[]{1, \ldots,\dvar}\).

%%%%%%%%%%%%%%%%%%%%%%%%%%%%%%%%%%%%%%%%%
We introduce a key technical assumption that underpins our main results.
%%%% Slater assumption %%%%
\begin{assum}\label{assum:slater-like}
\longthmtitle{Strict feasibility}
The set of feasible initial states \(\fsblset\) defined in \eqref{eq:feas_set} is nonempty and the OCP \eqref{eq:regularized:SIP}, satisfies a \emph{strict feasibility condition} of the following form: there exist \(\Param\), and an open, nonempty set \(O \subset \intr{\admst} \times \intr{\adparam}\) such that the following hold:
\begin{align*}
\begin{cases}
(\st(t),\Param\Reg(t)) \in O, \quad \st(0)=\param, \quad \st(\tfin) \in \intr{\finset}\\
\text{for all } \Paramw \in \adparamD \text{ and all } t \in \lcrc{0}{\horizon}.
\end{cases}
\end{align*}
\end{assum}
%%%
The following Proposition establishes a regularity property of the function \(\gfunc(\cdot)\) to be used in our subsequent analysis.
%%%% Prop on reg of F(.) %%%%%%%
\begin{prop}
\label{lem:exact:sols}
\longthmtitle{Regularity of the function \(\gfunc(\cdot)\)}
Fix \(\regupara>0\) and \(\xz \in \fsblset\), and consider the OCP \eqref{eq:regularized:SIP} and \eqref{eq:g_func} along with their data. Let \(\valuefunc(\xz,\regupara)\) be the optimal value of the OCP \eqref{eq:regularized:SIP} and suppose that the strict feasibility assumption \ref{assum:slater-like} holds for \eqref{eq:regularized:SIP}. Define \(\totconset^{\dvar} \Let \lcrc{0}{\horizon}^{\dvar}\times \adparamD^{\dvar}\). Consider the optimization problem 
    \begin{equation}
    \label{e:global_max_prob}
    \begin{aligned}
        & \sup_{(\tseq,\bseq)}
        &&\gfunc(\xz,\ol{\eps};\tseq,\bseq)\\
        & \sbjto   && (\tseq,\bseq) \in \totconset^{\dvar}.
    \end{aligned}
\end{equation}
    Then:
    \begin{enumerate}[label=\textup{(\ref{lem:exact:sols}-\alph*)}, leftmargin=*, widest=b, align=left]
        \item \label{lemm:exact:sols_1} for each fixed \(\xz \in \fsblset\) and \(\regupara>0\), \(\totconset^{\dvar} \ni  (\tseq,\bseq) \mapsto \gfunc(\xz,\ol{\eps};\tseq,\bseq) \in \Rbb\) is continuous;
% \item \label{thrm:exact_sol_cont}

        \item \label{lemm:exact:sols_2} there exists  \(\bigl(\tseq^{\ast}(\xz,\ol{\eps}),\bseq^{\ast}(\xz, \ol{\eps})\bigr) \in \totconset^{\dvar}\) that solves \eqref{e:global_max_prob}.
    \end{enumerate}
\end{prop}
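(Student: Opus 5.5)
We prove \ref{lemm:exact:sols_1} by parametric optimization, namely Berge's maximum theorem, and then obtain \ref{lemm:exact:sols_2} from the Weierstrass theorem.

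Write \(\gfunc(\xz,\regupara;\tseq,\bseq)=\inf\{\Phi(\Param)\suchthat \Param\in \mathcal{C}(\tseq,\bseq)\}\), where \(\Phi(\Param)\Let\objective(\Param\Reg(\cdot))+\regupara\regularizer(\Param)\). The objective \(\Phi\) does not depend on \((\tseq,\bseq)\), and it is continuous by part \ref{prop:uq:sol:II} of the proof of Proposition~\ref{prop:unique solution}. The constraint map is
\[
\mathcal{C}(\tseq,\bseq)\Let\{\Param\in\adparam\suchthat \st(t^i;\xz,\Param,\Paramw^i)\in\admst,\ \Param\Reg(t^i)\in\admcont,\ \st(\tfin;\xz,\Param,\Paramw^i)\in\finset,\ i=1,\ldots,\dvar\}.
\]
Each \(\mathcal{C}(\tseq,\bseq)\) lies in the compact set \(\adparam\) of Proposition~\ref{prop:comp:conv:adparam:sets}. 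It is closed and convex because it is a finite intersection of preimages of closed convex sets under affine maps in \(\Param\). It is nonempty because the strictly feasible \(\Param\) of Assumption~\ref{assum:slater-like} satisfies the constraints for every \(t\) and every \(\Paramw\). Hence \(\gfunc\) is finite on \(\totconset^{\dvar}\), and Berge's theorem gives continuity once \(\mathcal{C}\) is shown to be both upper and lower hemicontinuous with compact values.

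\textbf{Upper hemicontinuity.} The graph of \(\mathcal{C}\) is closed. Indeed, the map \((t,\Param,\Paramw)\mapsto\st(t;\xz,\Param,\Paramw)\) in \eqref{eq:p_sol} is jointly continuous, since it is an integral of bounded integrands with variable upper limit and affine dependence on the parameters. The constraints \(\st(t^i)\in\admst\) and \(\st(\tfin)\in\finset\) are therefore closed conditions in \((\tseq,\bseq,\Param)\). Closed graph together with a compact range \(\adparam\) gives upper hemicontinuity. One caveat: \(t\mapsto\Reg(t)\) is only piecewise constant, so the constraint \(\Param\Reg(t^i)\in\admcont\) is not continuous in \(t^i\). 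I would handle this by noting that \(\Param\in\adparam\) already forces \(\Param\Reg(t)\in\admcont\) for all \(t\), so this constraint is redundant and can be dropped from \(\mathcal{C}\).

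\textbf{Lower hemicontinuity.} This is the main obstacle, and I would use the Slater point. Let \(\Param^\circ\) be the strictly feasible parameter of Assumption~\ref{assum:slater-like}. Fix \((\tseq,\bseq)\), a point \(\Param\in\mathcal{C}(\tseq,\bseq)\), and a sequence \((\tseq_n,\bseq_n)\to(\tseq,\bseq)\). For \(\lambda\in\lcro{0}{1}\) set \(\Param_\lambda=\lambda\Param+(1-\lambda)\Param^\circ\), which lies in \(\adparam\) by convexity.

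Since \(\st\) is affine in \((\Param,\Paramw)\), I would aim to show that \(\st(t^i;\Param_\lambda,\Paramw^i)\) lies in \(\intr{\admst}\) with a positive margin \(\delta_\lambda\). The Slater trajectory takes values in \(\intr{\admst}\) uniformly over the compact set \(\lcrc{0}{\horizon}\times\adparamD\), so it has a uniform positive margin. The point \(\st(t^i;\Param_\lambda,\Paramw^i)\) is a convex combination of a point of \(\admst\) and a point at uniform positive distance from \(\partial\admst\), and such a combination is interior with margin proportional to \(1-\lambda\). This argument needs \(\st(t^i;\Param_\lambda,\Paramw^i)=\lambda\st(t^i;\Param,\Paramw^i)+(1-\lambda)\st(t^i;\Param^\circ,\Paramw^i)\), which holds because \(\st\) is affine in \(\Param\) for fixed \((t^i,\Paramw^i)\). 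The same reasoning applies to \(\finset\) at time \(\tfin\).

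By joint continuity of \(\st\), for \(n\) large we then have \(\Param_\lambda\in\mathcal{C}(\tseq_n,\bseq_n)\). A diagonal choice \(\lambda_n\uparrow1\) produces \(\Param_{\lambda_n}\in\mathcal{C}(\tseq_n,\bseq_n)\) with \(\Param_{\lambda_n}\to\Param\), which is lower hemicontinuity. Berge's theorem now yields continuity of \((\tseq,\bseq)\mapsto\gfunc(\xz,\regupara;\tseq,\bseq)\), proving \ref{lemm:exact:sols_1}.

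For \ref{lemm:exact:sols_2}, \(\totconset^{\dvar}=\lcrc{0}{\horizon}^{\dvar}\times\adparamD^{\dvar}\) is compact by Proposition~\ref{prop:comp:conv:adparam:sets} and Tychonoff's theorem in finite dimensions. The Weierstrass theorem applied to the continuous function \(\gfunc(\xz,\regupara;\cdot,\cdot)\) then gives a maximizer \(\bigl(\tseq^{\ast}(\xz,\regupara),\bseq^{\ast}(\xz,\regupara)\bigr)\) of \eqref{e:global_max_prob}.
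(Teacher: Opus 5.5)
Your proof is correct and follows essentially the paper's route. Both arguments first establish continuity of the feasible-set map of \eqref{eq:g_func}: the upper part from joint continuity of the state map and compactness of \(\adparam\), and the lower part from convexity and the Slater point. Both then deduce continuity of the optimal value and apply Weierstrass on the compact set \(\totconset^{\dvar}\). The paper cites Dontchev--Rockafellar for the middle step (Painlev\'e--Kuratowski, then Pompeiu--Hausdorff continuity, then their Theorem 3B.5). You instead invoke Berge's theorem and prove lower hemicontinuity directly by convex combination with the Slater point. Your observation that \(\Param\Reg(t^i)\in\admcont\) is redundant once \(\Param\in\adparam\) is also a cleaner way around the discontinuity of \(\Reg(\cdot)\) in \(t\). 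The paper instead claims the constraint is equivalent to \(\norm{\Param_i}_{\ell_1}\le\ubound^i\), but that bound is in general only sufficient, not equivalent.
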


%%%%%%%%%% Proof of Proposition III.4 %%%%%%%%%%%%%%%

\begin{proof}
\normalfont
We start with the first assertion. Fix \(\regupara>0\), \(\xz \in \fsblset\), and \((\ol{\Param}, \ol{\tseq},\ol{\bseq}) \in \adparam \times \totconset^{\dvar}\). Observe that
\begin{enumerate}[label=\textup{L.\alph*)},leftmargin=*]
    \item \label{lemm:point:a} the map \(\Param \mapsto \int_0^{\horizon} \norm{\Param \Reg(s)}_{\ell_1}\odif{s} + \regupara \regularizer(\Param) \) is convex. Indeed, \(\Param \mapsto \int_0^{\horizon} \norm{\Param \Reg(s)}_{\ell_1}\odif{s}\) is convex; see the arguments in point \ref{prop:uq:sol:II} in Proposition \ref{prop:unique solution}. The map \(\Param \mapsto \regularizer(\Param)\) is convex by design.

    \item \label{lemm:point:b} \((\Param,\tseq,\bseq) \mapsto \int_0^{\horizon} \norm{\Param \Reg(s)}_{\ell_1}\odif{s} + \regupara \regularizer(\Param) \) is continuous around \((\ol{\Param},\ol{\tseq},\ol{\bseq}) \in \adparam \times \totconset^{\dvar}\). The proof of this claim proceeds along the same lines as the arguments presented in the proof of Proposition  \ref{prop:unique solution}, and also uses the fact that the mapping \((\Param,\tseq,\bseq) \mapsto \regularizer(\Param)\) is continuous.

    \item \label{lemm:point:c} Recall the expression of the solution trajectory \eqref{eq:p_sol}. Leveraging the piecewise constant nature of \(\Reg(\cdot)\) and \(\RegD(\cdot)\), we rewrite the map \(t\mapsto x(t)\) as
    \begin{align}
        &\st(t;\param,\Param,\Paramw) \nn \\
        & = \epower{At}\overline{x} +  \int_{0}^{t} \epower{A(t-s)} \bigl(B \Param \Reg(s) + \Paramw \RegD(s)\bigr) \odif{s}\nn 
        \\ & = \epower{At}\overline{x} + \sum_{i=1}^{K}\biggl(\int_{s_{i-1}}^{s_i}\epower{A(t-s)}\odif{s}\biggr)(B\Param\Reg_i+\Paramw\RegD_i).\nn
\end{align}
Thus,
    the constraint map \[ \adparam \times \lcrc{0}{\horizon} \times \adparamD  \ni (\Param,t,\Paramw) \mapsto \begin{pmatrix}
        \st(t; \xz,\Param, \Paramw)\\
        \st(\horizon; \xz,\Param, \Paramw)
    \end{pmatrix}\] 
    is jointly continuous. Moreover, from Proposition \ref{prop:comp:conv:adparam:sets}, the constraints on the control action
    \[\Param \Reg(t) \in \admcont\]
    can be equivalently represented by constraints on the coefficient \(\Param\) by
    \[\norm{\Param_i}_{\ell_1} \leq \ubound^i \quad \text{for each }i=1,2,\ldots, N.\]
    This can be expressed as the intersection of affine constraints which are continuous in \((\Param,t,\Paramw)\).

    \item \label{lemm:point:d} The set \(\adparam\) is compact; see Proposition \ref{prop:comp:conv:adparam:sets}.
    % \todo[fancyline]{Siddhartha asks why?}

    \item \label{lemm:point:e} For fixed \((\ol{\tseq},\ol{\bseq})\), \(\param\in \fsblset\), \(\regupara>0\), and \(i=1,2,\ldots, \dvar\), the sets \(\aset[]{\Param \in \adparam \suchthat \Param \Reg(t^i) \in \admcont}\), \(\aset[]{\Param \in \adparam \suchthat \st(t^i) \in \admst}\), and \(\aset[]{\Param \in \adparam \suchthat \st(\horizon) \in \finset}\) are convex. This is due to the fact that the sets \(\admcont\), \(\admst\), and \(\finset\) are convex and the associated maps \(\Param \mapsto \Param\Reg(t^i)\), \(\Param \mapsto x(t^i;\param,\Param,\Paramw^i)\), and \(\Param \mapsto x(T;\param,\Param,\Paramw^i)\) for each \(i\in \aset[]{1,\ldots,\dvar}\), are linear.
  
    \item \label{lemm:point:f} Assumption \ref{assum:slater-like} ensures the existence of \(\widehat{\Param} \in \adparam\) such that the constraint is strictly feasible at \((\widehat{\Param}, \ol{\tseq}, \ol{\bseq}) \in \adparam \times \totconset^{\dvar}\).
\end{enumerate}
\vspace{2mm}
Then the feasible set mapping of \eqref{eq:g_func} is continuous in the sense of Painlev\'e-Kuratowski \cite[Chapter 3, \S 3.2]{ref:DonRoc-14}. Indeed, this follows immediately from \ref{lemm:point:a}, \ref{lemm:point:c}, and the Slater-like condition in Assumption \ref{assum:slater-like}, using \cite[Example 3B.4]{ref:DonRoc-14}. Moreover, the feasible set mapping of \eqref{eq:g_func} is nonempty and bounded (see Proposition \eqref{prop:unique solution}). This, along with its continuity, implies that the feasible set mapping is Pompeiu-Hausdorff continuous at \((\ol{\tseq},\ol{\bseq})\); see \cite[Theorem 3B.3]{ref:DonRoc-14}. Finally, employing \cite[Theorem \(3\text{B.}5\)]{ref:DonRoc-14} we obtain the continuity of the optimal value mapping \(\gfunc(\xz,\regupara;\cdot,\cdot)\) around \((\ol{\tseq},\ol{\bseq})\). Since \((\ol{\tseq},\ol{\bseq})\) is an arbitrary point, we get continuity of \(\gfunc(\xz,\regupara;\cdot,\cdot)\). This completes the proof of assertion \ref{lemm:exact:sols_1}.

For the second part \ref{lemm:exact:sols_2}, concerning the existence of an optimizer \(\bigl(\tseq^{\ast}(\xz,\ol{\eps}),\bseq^{\ast}(\xz, \ol{\eps})\bigr) \in \totconset^{\dvar}\), the assertion follows readily from the continuity of \(\gfunc(\xz,\regupara;\cdot,\cdot)\) (established in \ref{lemm:exact:sols_1}) and the compactness of \(\totconset^{\dvar}\), appealing to the Weierstrass Theorem \cite[Chapter \(9\), p. \(33\)]{ref:VAZZ-16}.

\end{proof}

% A proof of Proposition \ref{lem:exact:sols} is in Appendix~\ref{appendix:A}.
%%%
\begin{rem}\label{rem:cont:of:F}
\longthmtitle{Significance of~\ref{lemm:exact:sols_1}}
The continuity property \ref{lemm:exact:sols_1} of the optimal value mapping \(\totconset^{\dvar} \ni  (\tseq,\bseq) \mapsto \gfunc(\xz,\ol{\eps};\tseq,\bseq) \in \Rbb\) of the convex program \ref{eq:g_func} is of substantial importance in the selection of algorithms for numerical solutions to \eqref{e:global_max_prob}, and also widens the library of optimization algorithms that may be employed in \eqref{e:global_max_prob}; also see Remark \ref{rem:F:reg:choice} ahead.
\end{rem}

%%%%%%%%%%%%%%%%%
Now we provide the first main result of this article.
\begin{theorem}
\label{thrm:exact:sols}
\longthmtitle{Exact solutions for~\eqref{eq:SR:pre_SIP}}
Consider the OCP \eqref{eq:regularized:SIP} and \ref{eq:g_func} along with their data and suppose that the hypothesis of Proposition \ref{lem:exact:sols} remains valid. Consider the optimization problem \eqref{e:global_max_prob}. Then, with fixed \(\xz \in \fsblset\) and \(\regupara>0\), we have an exact solution, i.e.,
\begin{equation}\label{thrm:exact:sols_3}
\valuefunc(\xz,\ol{\eps})=\gfunc\bigl(\xz,\ol{\eps};\tseq^{\ast}(\xz,\regupara),\bseq^{\ast}(\xz,\regupara)\bigr),
\end{equation}
and the sequence of optimizers \(\bigl(\Param^{\ast}_{\xz, \ol{\eps}}\bigr)_{\ol{\eps}>0}\) defined by
\begin{align*}
\Param^{\ast}_{\xz,\regupara} = & \argmin_{\Param \in \adparam} && \objective(\Param\Reg(\cdot)) + \regupara\regularizer(\Param) \\
        &  \sbjto &&  \begin{cases}\text{the constraints in \eqref{eq:g_func}}\\ 
        \text{hold for }\bigl(\tseq^{\ast}(\xz,\ol{\eps}),\bseq^{\ast}(\xz,\ol{\eps})\bigr),
        \end{cases}
\end{align*}
converges to an optimizer of \eqref{eq:SR:pre_SIP} as \(\ol{\eps} \downarrow 0\).
\end{theorem}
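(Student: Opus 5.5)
The plan is to prove \eqref{thrm:exact:sols_3} by a Helly-type argument for convex semi-infinite programs, in the spirit of \cite{ref:DasAraCheCha-22,ref:ParCha-23}, and then the convergence claim by a standard Tikhonov-regularization argument.

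\textbf{Step 1: the inequality \(\gfunc\le\valuefunc\).} For every \((\tseq,\bseq)\in\totconset^{\dvar}\) the program \eqref{eq:g_func} keeps only finitely many of the constraints of \eqref{eq:regularized:SIP}. Hence it is a relaxation, and \(\gfunc(\xz,\regupara;\tseq,\bseq)\le\valuefunc(\xz,\regupara)\). Taking the supremum, which is attained by Proposition~\ref{lemm:exact:sols_2}, gives \(\gfunc(\xz,\regupara;\tseq^{\ast},\bseq^{\ast})\le\valuefunc(\xz,\regupara)\).

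\textbf{Step 2: the reverse inequality.} For each index \(s=(t,\Paramw)\in\lcrc{0}{\horizon}\times\adparamD\), let \(C_s\) denote the convex set of \(\Param\in\adparam\) satisfying the constraints indexed by \(s\). For a level \(\gamma<\valuefunc(\xz,\regupara)\), consider the convex sublevel set \(L_\gamma\Let\{\Param\in\adparam : \objective(\Param\Reg(\cdot))+\regupara\regularizer(\Param)\le\gamma\}\); it is compact because \(\adparam\) is compact by Proposition~\ref{prop:comp:conv:adparam:sets}. The family \(\{C_s\cap L_\gamma\}_s\) consists of compact convex sets in \(\Rbb^{\dvar}\), and its total intersection is empty by definition of \(\valuefunc\).

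By compactness (finite intersection property), some finite subfamily already has empty intersection. Helly's theorem in \(\Rbb^{\dvar}\) then yields at most \(\dvar+1\) sets among \(L_\gamma\) and the \(C_s\) with empty intersection. Since each \(C_s\) contains the strictly feasible point of Assumption~\ref{assum:slater-like}, the \(C_s\) alone intersect, so \(L_\gamma\) must be one of these sets. We therefore obtain \(\dvar\) indices \(s_1,\ldots,s_\dvar\) with \(L_\gamma\cap C_{s_1}\cap\cdots\cap C_{s_\dvar}=\emptyset\), that is, \(\gfunc(\xz,\regupara;\tseq_\gamma,\bseq_\gamma)>\gamma\).

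Letting \(\gamma\uparrow\valuefunc\), we get \(\sup\gfunc\ge\valuefunc\), which gives \eqref{thrm:exact:sols_3}.

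\textbf{Main obstacle: uniting the constraints indexed by one \(s\).} The terminal constraint \(\st(\horizon)\in\finset\) and the control constraint must be handled inside each \(C_s\), matching the paper's convention that each index \(i\) carries \(\st(t^i)\in\admst\), \(\Param\Reg(t^i)\in\admcont\) and the terminal condition with \(\Paramw^i\). I would define \(C_s\) exactly as the constraint set attached to one pair \((t^i,\Paramw^i)\) in \eqref{eq:g_func}, so that the Helly count is precisely \(\dvar\).

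\textbf{Step 3: the optimizer of the finite problem solves \eqref{eq:regularized:SIP}.} Let \(\Param^{\ast}_{\xz,\regupara}\) be the unique minimizer of the finite program at \((\tseq^{\ast},\bseq^{\ast})\); uniqueness holds by strict convexity and compactness. Let \(\widetilde\Param\) be the unique solution of \(\eqref{eq:regularized:SIP}\) from Proposition~\ref{prop:unique solution}. Then \(\widetilde\Param\) is feasible for the finite program and attains the value \(\valuefunc=\gfunc(\tseq^{\ast},\bseq^{\ast})\). Hence it is a minimizer of the finite program, and by uniqueness \(\Param^{\ast}_{\xz,\regupara}=\widetilde\Param\). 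This identification is the key point: it reduces the remaining claim to the regularized semi-infinite problem \eqref{eq:regularized:SIP} itself.

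\textbf{Step 4: convergence as \(\regupara\downarrow0\).} Let \(\Param^{\circ}\) be any optimizer of \eqref{eq:SR:pre_SIP}. The iterates lie in the compact set \(\mathcal{F}_O\), and optimality gives
\[
\objective(\Param^{\ast}_{\regupara}\Reg(\cdot))+\regupara\regularizer(\Param^{\ast}_{\regupara})\le\objective(\Param^{\circ}\Reg(\cdot))+\regupara\regularizer(\Param^{\circ}).
\]
Since \(\regularizer>0\), passing to a convergent subsequence and using continuity shows that every limit point is feasible with cost at most the optimal value, hence is an optimizer. The same inequality also gives \(\regularizer(\Param^{\ast}_\regupara)\le\regularizer(\Param^{\circ})\) for every optimizer \(\Param^{\circ}\). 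Consequently every limit point minimizes \(\regularizer\) over the compact convex optimal set of \eqref{eq:SR:pre_SIP}. By strict convexity of \(\regularizer\) this minimizer is unique, so the whole net converges to it, which is an optimizer of \eqref{eq:SR:pre_SIP}.
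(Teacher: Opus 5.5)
Your proof is correct, but it takes a more elementary, self-contained route than the paper. The paper does not argue directly. It verifies the standing hypotheses \ref{thrm:point:a}--\ref{thrm:point:f}: closed convex domain, convex continuous objective, joint continuity of the constraint maps in \((\Param,t,\Paramw)\), convexity in \(\Param\), and a compact index set. It then invokes \cite[Theorem~1]{ref:DasAraCheCha-22} for the value identity \eqref{thrm:exact:sols_3} and \cite[Proposition~3.3]{ref:ParCha-23} for the convergence of optimizers.

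You instead unpack both cited results:
\begin{itemize}
\item Step~2 is the classical finite-intersection-plus-Helly reduction of a convex semi-infinite program to \(\dvar\) indices. Bundling all constraints attached to one pair \((t,\Paramw)\) into a single convex set \(C_s\) is exactly what yields the count \(\dvar\) and matches \eqref{eq:g_func}.
\item Step~3 makes explicit that the finite-program minimizer coincides with the unique solution of \eqref{eq:regularized:SIP}; the paper leaves this inside the citation.
\item Step~4 is the standard Tikhonov selection argument.
\end{itemize}

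Your route buys transparency about where each hypothesis is used. The Helly step needs only closed convex \(C_s\), a convex continuous objective, compactness of \(\adparam\), and a nonempty feasible set. Assumption~\ref{assum:slater-like} and joint continuity therefore enter solely through part \ref{lemm:exact:sols_2} of Proposition~\ref{lem:exact:sols}, i.e., through attainment of the supremum. Your Step~4 also gives more than the statement: the whole family converges to the unique \(\regularizer\)-minimal optimizer of \eqref{eq:SR:pre_SIP}. The paper's route buys brevity and keeps the result inside the CSIP framework of \cite{ref:DasAraCheCha-22} on which the algorithm is built.

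Two details are worth writing out. First, pad with repeated indices if Helly returns fewer than \(\dvar\) of them. Second, the bound \(\regularizer(\Param^{\ast}_{\regupara})\le\regularizer(\Param^{\circ})\) uses more than your displayed inequality: it also needs \(\objective(\Param^{\circ}\Reg(\cdot))\le\objective(\Param^{\ast}_{\regupara}\Reg(\cdot))\), which holds because \(\Param^{\ast}_{\regupara}\in\mathcal{F}_O\).
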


%%%%% Proof of Theorem III.6 %%%%%%
\begin{proof}
To show the equality of optimal values in \eqref{thrm:exact:sols_3}, we will employ \cite[Theorem 1]{ref:DasAraCheCha-22}. To this end, for the CSIP \ref{eq:regularized:SIP}, we verify \cite[Assumptions (1.1)-a--(1.1)-e]{ref:DasAraCheCha-22}:
\begin{enumerate}[label=\textup{T.\alph*)},leftmargin=*]
\item \label{thrm:point:a} The domain \(\adparam\) is closed and convex; see Proposition \ref{prop:comp:conv:adparam:sets}.
\item \label{thrm:point:b} The admissible set of the OCP \eqref{eq:regularized:SIP} is nonempty by Assumption \ref{assum:slater-like}.
\item \label{thrm:point:c} The objective function is convex and continuous in the decision variable \(\Param\); see the arguments established in Proposition \ref{prop:unique solution}.
\item \label{thrm:point:d} \emph{Joint continuity of the constraint maps in the (decision variable, semi-infinite variable)-pair:} The state constraint mappings
\[
 \adparam \times \lcrc{0}{\horizon} \times \adparamD  \ni (\Param,t,\Paramw) \mapsto \begin{pmatrix}
        \st(t; \xz,\Param, \Paramw)\\
        \st(\horizon; \xz,\Param, \Paramw)
   \end{pmatrix}
\]
are jointly continuous; see \ref{lemm:point:c} in the proof of Proposition \ref{lem:exact:sols}.  Recall that the control constraint \(\Param \Reg(t) \in \admcont\) admits the reformulation \(\norm{\Param_i}_{\ell_1} \leq \ubound\) for each \(i=1,2,\ldots, N.\) This is an intersection of affine constraints that are continuous in \((\Param,t,\Paramw)\).
\item \label{thrm:point:e} \emph{Convexity of the constraint maps in the decision variable:}
The maps \begin{align}
\adparam  \ni  \Param \mapsto \begin{pmatrix}
    \st(t; \xz,\Param, \Paramw)\\
    \st(\horizon; \xz,\Param, \Paramw)\\
    \Param \Reg(t)
\end{pmatrix}
\end{align} 
is affine in the decision variable \(\Param\). Consequently, the sets \(\aset[]{\Param \in \adparam \suchthat \Param \Reg(t) \in \admcont}\), \(\aset[]{\Param \in \adparam \suchthat \st(t) \in \admst}\), and \(\aset[]{\Param \in \adparam \suchthat \st(\horizon) \in \finset}\) are convex as they can be expressed as preimages of convex sets under linear/affine mappings. 
\item \label{thrm:point:f} The constraint index set \(\lcrc{0}{\horizon} \times \adparamD\) is compact; see Proposition \ref{prop:comp:conv:adparam:sets}. 
\end{enumerate}
\vspace{2mm}
Assumption \ref{assum:slater-like} along with the preceding points \ref{thrm:point:a}--\ref{thrm:point:f} and \cite[Theorem \(1\)]{ref:DasAraCheCha-22} ensure that with fixed \(\xz \in \fsblset\) and \(\regupara>0\),
\[
\valuefunc(\xz,\ol{\eps})=\gfunc\bigl(\xz,\regupara;\tseq^{\ast}(\xz,\regupara),\bseq^{\ast}(\xz,\regupara)\bigr).
\]
Thus, \eqref{thrm:exact:sols_3} is established. Using the arguments \ref{thrm:point:a}--\ref{thrm:point:f}, the convergence of optimizers as \(\regupara \downarrow 0\), follows immediately by applying \cite[Proposition 3.3]{ref:ParCha-23}. This completes the proof. 
\end{proof}

%%%%% subsec: algo %%%%%%%%
\subsection{\emph{\(\sprob(\param,\regupara)\):} Architecture to solve \eqref{e:global_max_prob}}\label{subsec:sparserob}
We present an algorithmic structure ---  \(\sprob(\cdot,\cdot)\) --- built on Proposition \ref{lem:exact:sols} and Theorem \ref{thrm:exact:sols}, to solve the global optimization problem \eqref{e:global_max_prob}. 
{
\renewcommand{\algorithmcfname}{\(\sprob(\xz,\regupara)\)}
\renewcommand{\thealgocf}{}
\begin{algorithm2e}[!ht]
\SetAlgoLined
\DontPrintSemicolon
\SetKwInOut{ini}{Initialize}
\SetKwInOut{giv}{Data}
\giv{Stopping criterion for recovering the optimizer: \(\textsf{SC}_1(\cdot)\), threshold for \(\textsf{SC}_1\): \(\tau_1\),  Stopping criterion for the global optimization routine: \(\textsf{SC}_2(\cdot)\), threshold for \(\textsf{SC}_2\): \(\tau_2\), Selection criterion for global optimization: $\textsf{SlCr}(\cdot)$, initialization for the regularization parameter \(\eps\), function for implementing regularization $\regul(\cdot,\cdot)$;}
\ini{initialize constraint indices -- \((\tseq_0,\bseq_0) \in \lcrc{\tinit}{\tfin}^{\dvar} \times \adparamD^{\dvar}\),  \(j=0\);}
%
% \BlankLine
%
\While{$\textsf{SC}_1(j)\geqslant \tau_1$}{
\(\regupara \gets \regul(\regupara,j) \)\; 
Set the initial value \(\gfunc^{j}_{\max} = \gfunc(\param,\regupara;\tseq_0,\bseq_0)\)\; 
Set the initial solution \(\ol{\Param}^{j}_{\param,\regupara} = \argmin \gfunc(\param,\regupara;\tseq_0,\bseq_0)\)\;

$k\gets 0$ \;

\While{$\textsf{SC}_2(k)\leqslant \tau_2$}{
\emph{Sample:} \(\tseq^{j,k} \in \lcrc{0}{T}^{\dvar}\);
\(\bseq^{j,k} \in \adparamD^{\dvar}\)\;

\emph{Evaluate:} \(\gfunc^{j,k} \Let \gfunc(\param,\regupara;\tseq^{j,k},\bseq^{j,k})\) as defined in \eqref{eq:g_func} \;      

\emph{Recover the solution} \(\ol{\Param}^{j,k}_{\param,\regupara}\) by solving: 
\[\min_{\Param}\left\{ \hspace{-1mm}\objective\bigl(\Param\Reg(\cdot)\bigr) + \regupara \regularizer(\Param) \hspace{-1mm}\,\middle\vert  \begin{array}{@{}l@{}}
\text{constraints} \\ \text{in } \ref{eq:SR:pre_SIP} \text{ hold } \\ \text{at } (\tseq^{j,k},\bseq^{j,k})        \end{array}
\right\}\]

Update $\gfunc^{j}_{\max}, \ol{\Param}^{j}_{\param,\regupara} \gets $\textsf{SlCr}$(\gfunc^{j}_{\max}, \ol{\Param}^{j}_{\param,\regupara}, \gfunc^{j,k}, \ol{\Param}^{j,k}_{\xz,\regupara},k)$

Update $k \gets k+1$ \;
}\;
Update $j\gets j+1$\;
}
\textbf{Output:} \(\Param\as(\param,\regupara),\Paramw\as(\param,\regupara)\)
\caption{A generic architecture to recover sparse robust optimal control}
\label{alg:sprob}
\end{algorithm2e}
}

\begin{rem}\label{rem:F:reg:choice}
\longthmtitle{Criterion for the choice of global optimization routine}
\emph{\(\sprob(\cdot,\cdot)\)} offers a unified framework for synthesizing constrained sparse and robust control actions: one simply selects an appropriate global optimization routine and uses it to solve the maximization problem \eqref{e:global_max_prob}. An array of such choices can be made depending on the regularity property of the function \(\gfunc(\param,\regupara;\cdot, \cdot)\). 
\begin{itemize}[leftmargin=*,label =\(\circ\)]
    \item If \(\gfunc(\param, \regupara; \cdot, \cdot)\) is continuous, as established in Proposition \ref{lem:exact:sols}, and if the global optimization routine in the algorithm is implemented using the simulated annealing oracle with an appropriate cooling schedule and mild conditions on the transition kernel, then convergence is guaranteed by \cite[Theorem 2]{ref:DasAraCheCha-22} and \cite[Theorem 1]{ref:CJPB-92}.

    \item If \(\gfunc(\param,\regupara;\cdot, \cdot)\) is Lipchitz continuous, then one can employ the global optimization oracles such as \emph{\texttt{SequOOL}} (Sequential Optimistic Optimization with Levels) or \emph{\texttt{LIPO}} (Lipschitz Optimization based on Local Partitions).\footnote{Lipschitz regularity can be established under suitable smoothness assumptions on the constraint sets, as shown in \cite{ref:FiaIsh-90}; we omit the details here.} \emph{\texttt{SequOOL}} uses a deterministic hierarchical scheme \cite{ref:BarGabVal-19} that yields an exponential regret bound \cite{ref:GriValMun-15}. \emph{\texttt{LIPO}} employs stochastic averaging \cite{ref:MalVay-17} and achieves strong PAC-style regret bounds. Both lead to fast convergence to global optima in practice when combined with randomized restarts.
\end{itemize}
 \end{rem}
 %%%%
 \begin{rem}
 \label{rem:on stopping criteria}
 \longthmtitle{Choices of metadata for \(\sprob(\cdot,\cdot)\)}
\emph{\(\sprob(\cdot,\cdot)\)} is written as a general architecture configured for solving OCPs of the form~\eqref{eq:SR-OCP}, in line with the theoretical developments outlined in the preceding sections. Several components of the architecture \emph{\(\sprob(\xz,\regupara)\)} --- such as \emph{\(\textsf{SC}_1(\cdot)\), \(\textsf{SC}_2(\cdot)\), \(\textsf{Regu}(\cdot,\cdot)\)}, and \emph{\(\textsf{SICr}(\cdot)\)} --- are user-defined and customizable, allowing adaptation to the specific global optimization routine being employed. For example:
\begin{itemize}[leftmargin=*]
\item \(\textsf{SC}_1(\cdot)\) denotes the Stopping Criterion that determines when the algorithm should terminate and thereby return an estimate of the optimizer. Since the optimizer is an accumulation point for the optimizer estimates returned from successive outer iterations, \(\textsf{SC}_1(\cdot)\) can, for example, be defined as the norm of the difference between the estimates obtained in successive iterations, i.e., \(\textsf{SC}_1(j)\Let \norm{\ol{\Param}^j_{\param,\regupara}-\ol{\Param}^{j-1}_{\param,\regupara}}\).
\item \(\textsf{Regu}(\cdot,\cdot)\) is the regularization function that updates \(\regupara\), this function must return a decreasing sequence of regularizers for an increase in the outer iteration. For instance, for \(j \in \Nz\) we can pick \(\textsf{Regu}(\regupara,j) = 2^{-j}\regupara\).
\item The inner loop in the algorithm represents a global optimization routine and \(\textsf{SC}_2(\cdot)\) here denotes the termination criterion for this routine. Consider for example that the user picks simulated annealing as the global optimization routine, then \(\textsf{SC}_2\) may be: maximum number of iterations, threshold on minimum temperature value or a bound on the difference between successive optimal values. 
\item Similarly, \(\textsf{SICr}(\cdot)\) denotes the Selection and Improvement Criterion employed by the chosen global optimization routine to determine whether to accept or reject a candidate solution. The specific form of this criterion may depend on factors such as the structure of the problem data or the regularity properties of \(\gfunc(\param, \regupara; \cdot, \cdot)\) as discussed in Remark \ref{rem:F:reg:choice}. For instance, in the case of Simulated Annealing, the criterion takes the following form: the candidate solution is accepted if it improves upon the current best solution, or otherwise with probability \(\exp{\bigl(\frac{\gfunc^{j,k}-\gfunc^{j}_{\max}}{T_m}\bigr)}\), where \(T_m\) is the current temperature.
\end{itemize}
 \end{rem}
%%%%%%%%%%%%%%%%%%%%%%%%%%%%%%%%%%%%%%%
Based on the developments in \S\ref{sec:main_result}, we demonstrate that our results extend to noisy parameter-dependent systems in \S\ref{sec:param:var:systems} and robust minimum attention problems in \S\ref{sec:min:attn}.

\section{Sparse robust control of noisy parameter-dependent system}\label{sec:param:var:systems}
This section treats a class of OCPs where the continuous-time dynamics depend on a parameter in addition to the process noise \(w(\cdot)\). We demonstrate that this problem can also be solved via the techniques employed in \S\ref{sec:main_result}. Let \(\nu \in \N\) and suppose that \(\pset \subset \Rbb^{\nu}\) is a nonempty, compact and convex set of parameters. A commonly occurring example of such uncertainty set \(\pset\) is a polytope, and it is widely found in the robust control literature \cite[Chapter 4]{ref:BoyElGFerBal-94}, \cite[Chapter 8]{ref:DullPag-13}. We consider the systems of the form 
\begin{equation} \label{eq:param:sys}
\dot \st(t) = A(p)\st(t) + B(p)\cont(t) + \dist (t) \,\,\text{for all } t \in \lcrc{\tinit}{\tfin},
\end{equation}
with the same data as given in \ref{ocp:data:1}--\ref{ocp:data:3} and we assume that the mappings \(\pset \ni p \mapsto A(p) \in \Rbb^{\dimst \times \dimst}\) and \( \pset \ni p \mapsto  B(p) \in \Rbb^{\dimst \times \dimcon}\) are continuous. Consequently, adopting the same parametrization of \(\cont(\cdot)\) and \(\dist(\cdot)\) given in \S\ref{sec:prob_form} we directly consider the parametrized and regularized OCP
\begin{equation}
\label{eq:param:OCP} 
\begin{aligned}
& \inf_{\Param \in \adparam} 	&& \objective\bigl(\Param\Reg(\cdot)\bigr) + \regupara \regularizer(\Param)\\
&  \sbjto		&&  \begin{cases}
\text{the constraints in \eqref{eq:SR:pre_SIP}}\\
\text{for all }(t,\Paramw,p) \in \lcrc{\tinit}{\tfin} \times \adparamD \times \pset.
\end{cases}
\end{aligned}
\end{equation}
Note that the solution map \(t\mapsto \st(t) = \st(t;\param,\Param,\Paramw,p)\) depends on the parameter \(p \in \pset\).

Recall the definition of \(\dvar\) in \eqref{eq:dvar}. Let us define \(\pseq \Let (p^1,\ldots,p^{\dvar}) \in \pset^{\dvar}\). To solve the CSIP \eqref{eq:param:OCP} following our preceding developments, we define the relaxed problem
\begin{equation}
    \label{eq:param:g_func}
    \begin{aligned}
        &\hspace{-3mm}\gfunc_p(\xz, \regupara;\tseq,\bseq,\pseq) && \Let\\  &\hspace{-3mm}\inf_{\Param\in \adparam}	&& \hspace{-15mm} \objective\bigl(\Param\Reg(\cdot)\bigr) + \regupara\regularizer(\Param) \\
        &  \hspace{-2mm}\sbjto &&  \hspace{-15mm}\begin{cases}
       \text{the constraints in \eqref{eq:SR:pre_SIP} with }i=1,\ldots,\dvar,\\ \text{for }(\tseq,\bseq,\pseq) \in \lcrc{\tinit}{\tfin}^{\dvar} \times \adparamD^{\dvar} \times \pset^{\dvar}.
        \end{cases}
    \end{aligned}
\end{equation}
The existence and uniqueness of solutions to \eqref{eq:param:OCP} can be established by replicating the steps in the proof of Proposition \ref{prop:unique solution}; we omit the details for brevity.
%%%% rem:on discrete param-var %%%%%%
\begin{rem}
\longthmtitle{On existing literature}
To the best of our knowledge, the only other work on robust optimization-based numerical solutions for sparse robust OCPs is \cite{ref:Scenario:zhang2022sparse}.\footnote{The first author acknowledges Zicheng Zhang for bringing this work to his attention during a visit to Kyoto University.} In \cite{ref:Scenario:zhang2022sparse}, the authors investigated a class of discrete-time, parameter-dependent systems of the form \(\st_{t+1} = A(p)\st(t) + B(p)\cont(t)\) with \(p \in \pset\). They addressed the problem without state and control constraints and developed a scenario programming-based algorithm to design a robust control law that (approximately) minimizes an \(\ell^1\)-objective. Our setting is far more complicated: we consider parameter-dependent and noisy continuous-time systems in the presence of state, action, and disturbance constraints, and in this setting, we offer guarantees of \emph{exact solutions}, whereas the scenario-driven algorithm in \cite{ref:Scenario:zhang2022sparse} can only ensure that the value of a specific relaxed problem (with i.i.d. uncertainty samples) is close to the optimal value with high probability.
\end{rem}
Our second main result given below extends the assertions of Proposition \ref{lem:exact:sols} and Theorem \ref{thrm:exact:sols}. For brevity, we sketch the proof in Appendix~\ref{appendix:A}, referring to the steps already established in the preceding results.
%%% param-var: corollary %%%%%%
\begin{theorem}
\label{thrm:exact:sols:param-var}
\longthmtitle{Exact solutions for \eqref{eq:param:OCP}}
Fix \(\param \in \fsblset\), \(\ol{\eps}>0\) and consider the OCP \eqref{eq:param:OCP} and \eqref{eq:param:g_func} along with their data. Suppose that a suitably modified version of the feasibility hypothesis in Assumption \ref{assum:slater-like} holds. Define \(\totconset^{\dvar} \Let \lcrc{0}{\horizon}^{\dvar}\times \adparamD^{\dvar} \times \pseq^{\dvar}\). Consider the optimization problem
 \begin{equation}\label{e:global_max_prob_param-var}
\sup_{(\tseq,\bseq,\pseq)} \aset[\big]{\gfunc_p(\param,\regupara;\tseq,\bseq,\pseq) \suchthat (\tseq,\bseq,\pseq) \in \totconset^{\dvar}}.
\end{equation}
Then \(\totconset^{\dvar} \ni (\tseq,\bseq,\pseq) \mapsto \gfunc_p\bigl(\xz,\regupara;\tseq,\bseq,\pseq\bigr)\) is continuous and there exists \(\bigl(\tseq^{\ast}(\xz,\regupara),\bseq^{\ast}(\xz,\regupara),\pseq^{\ast}(\xz,\regupara)\bigr)\) that solves \eqref{e:global_max_prob_param-var}. Moreover, we have exact solutions: \(\valuefunc(\xz,\ol{\eps})=\gfunc\bigl(\xz,\regupara;\tseq^{\ast}(\xz,\regupara),\bseq^{\ast}(\xz,\regupara),\pseq^{\ast}(\xz,\regupara)\bigr)\), and the sequence of optimizers \(\bigl(\Param^{\ast}_{\param,\regupara}\bigr)_{\regupara>0}\) defined by
\begin{align*}
\Param^{\ast}_{\param,\regupara} = & \argmin_{\Param \in \adparam} &&  \objective(\Param\Reg(\cdot)) + \regupara \regularizer(\Param) \\
        &  \hspace{2mm}\sbjto &&  \begin{cases}\text{the constraints in \eqref{eq:param:OCP}}\\ 
        \text{hold for }\bigl(\tseq^{\ast}(\param,\regupara),\bseq^{\ast}(\param,\regupara),\pseq^{\ast}(\xz,\regupara)\bigr),
        \end{cases}
\end{align*}
converges as \(\regupara \downarrow 0\) to an optimizer of \eqref{eq:param:OCP} with \(\regupara =0\) in the objective.
\end{theorem}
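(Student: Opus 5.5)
The plan is to reduce Theorem \ref{thrm:exact:sols:param-var} to the argument already used for Proposition \ref{lem:exact:sols} and Theorem \ref{thrm:exact:sols}. The only change is that the semi-infinite index set \(\lcrc{0}{\horizon}\times\adparamD\) is enlarged to \(\lcrc{0}{\horizon}\times\adparamD\times\pset\). I will re-verify the structural hypotheses \ref{thrm:point:a}--\ref{thrm:point:f} on this enlarged index set. Then I will invoke \cite[Theorem 1]{ref:DasAraCheCha-22} for the equality of values and \cite[Proposition 3.3]{ref:ParCha-23} for the convergence of optimizers as \(\regupara\downarrow 0\). The decision space is unchanged, of dimension \(\dvar = \dimcon N\), so the number of sampled indices in \eqref{eq:param:g_func} remains \(\dvar\).

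The items that do not involve \(p\) carry over verbatim. Compactness and convexity of \(\adparam\) come from Proposition \ref{prop:comp:conv:adparam:sets}. Convexity and continuity of \(\Param\mapsto\objective(\Param\Reg(\cdot))+\regupara\regularizer(\Param)\), with strict convexity, come from the proof of Proposition \ref{prop:unique solution}. Compactness of the index set holds because \(\pset\) is compact by hypothesis, so \(\lcrc{0}{\horizon}^{\dvar}\times\adparamD^{\dvar}\times\pset^{\dvar}\) is compact by Tychonoff in finite products. Convexity in \(\Param\) of each constraint set also survives. For fixed \((t,\Paramw,p)\) the solution
\[
\st(t;\param,\Param,\Paramw,p)=\epower{A(p)t}\param+\int_0^t \epower{A(p)(t-s)}\bigl(B(p)\Param\Reg(s)+\Paramw\RegD(s)\bigr)\odif{s}
\]
is still affine in \(\Param\), so the preimages of the convex sets \(\admst\), \(\finset\), \(\admcont\) are convex. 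Existence and uniqueness for \eqref{eq:param:OCP} then follow exactly as in Proposition \ref{prop:unique solution}.

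The step I expect to be the real obstacle is the joint continuity of \((\Param,t,\Paramw,p)\mapsto(\st(t;\cdot),\st(\horizon;\cdot))\), since \(p\) now sits inside the matrix exponential. My approach is as follows.

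\begin{enumerate}
\item Use the piecewise-constant structure of \(\Reg,\RegD\) to write \(\st\) as \(\epower{A(p)t}\param\) plus a finite sum of terms \(\bigl(\int_{t_{k-1}\wedge t}^{t_k\wedge t}\epower{A(p)(t-s)}\odif{s}\bigr)(B(p)\Param\Reg_{k-1}+\Paramw\RegD_{k-1})\), as in \ref{lemm:point:c}.
\item Observe that \((p,\tau)\mapsto\epower{A(p)\tau}\) is continuous as a composition of the continuous map \(p\mapsto A(p)\) with the matrix exponential.
\item Conclude that each integral is jointly continuous in \((p,t)\) by dominated convergence, using the uniform bound \(\sup_{p\in\pset}\norm{\epower{A(p)\tau}}\le \epower{\tfin\max_{p}\norm{A(p)}}\).
\end{enumerate}

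The remaining factors are continuous in \((\Param,\Paramw,p)\). The control constraint does not depend on \(p\) and is handled as before.

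With this in hand, the suitably modified Assumption \ref{assum:slater-like} is a strict feasibility condition uniform over \((t,\Paramw,p)\). It yields a Slater point for every sampled triple \((\tseq,\bseq,\pseq)\). The Painlev\'e--Kuratowski and Pompeiu--Hausdorff continuity argument of Proposition \ref{lem:exact:sols}, via \cite[Example 3B.4, Theorems 3B.3, 3B.5]{ref:DonRoc-14}, then gives continuity of \(\gfunc_p(\xz,\regupara;\cdot)\) on the compact index set. The Weierstrass theorem gives a maximizer \((\tseq^\ast,\bseq^\ast,\pseq^\ast)\) of \eqref{e:global_max_prob_param-var}. Finally, \cite[Theorem 1]{ref:DasAraCheCha-22} gives \(\valuefunc(\xz,\regupara)=\gfunc_p(\xz,\regupara;\tseq^\ast,\bseq^\ast,\pseq^\ast)\), and \cite[Proposition 3.3]{ref:ParCha-23} gives convergence of \(\Param^\ast_{\param,\regupara}\) to an optimizer of the unregularized problem as \(\regupara\downarrow0\).
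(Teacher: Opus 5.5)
Your proposal is correct and follows the paper's proof in Appendix~A essentially step for step. You re-verify the points in the proofs of Proposition~\ref{lem:exact:sols} and Theorem~\ref{thrm:exact:sols} on the index set enlarged by the compact set of parameters, with joint continuity of the state map in \(p\) through \(A(p)\), \(B(p)\) and \(e^{A(p)t}\) as the only new ingredient, and then invoke \cite[Theorem 1]{ref:DasAraCheCha-22} and \cite[Proposition 3.3]{ref:ParCha-23}, exactly as the paper does; your dominated-convergence argument (with the integration limits correctly truncated at \(t\)) is simply a more careful version of the paper's one-line appeal to continuity of \((A(p),B(p),e^{A(p)t})\).
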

Note that the architecture \(\sprob(\cdot,\cdot)\) introduced in \S\ref{subsec:sparserob} carries over with only minor modifications.

\subsection{Discussion and placement of our results}\label{sec:discussion}
Sparse robust signal processing techniques have advanced substantially over the past decade, and a plethora of techniques from robust optimization are employed in such problems \cite{ref:EldYonBenNem-04,ref:XiaGuiXin-14}. Consider the following prototypical problem in this field: For \(q,\nu,\nu' \in \N\), consider the optimization problem
\begin{align}
\label{eq:minmax:signal:pros}
    &\min_{v \in \Rbb^{\nu} } && \hspace{-1cm} \norm{v}_1  \nn \\
    & \sbjto && \hspace{-1cm} H(\xi)v \le \ell(\xi) \text{ for all }\xi \in \Xi,
\end{align}
where \(\Xi \subset \Rbb^{q}\) is a nonempty set of parameters/uncertainties, \(H: \Xi \lra \Rbb^{\nu' \times \nu}\) and \(\ell:\Xi \lra \Rbb^{\nu'}\) are continuous functions. The problem \eqref{eq:minmax:signal:pros} bears strong similarity with the problems of the type \eqref{eq:SR:pre_SIP} and \eqref{eq:param:OCP} treated in this article, and it is natural to ponder whether the tools from robust optimization alluded to the above are applicable for solving these problems. The answer is `no', for reasons we explain in detail below. 
\begin{enumerate}[label=\Roman*.]
\item Notice that in \eqref{eq:SR:pre_SIP}/\eqref{eq:param:OCP}, parametric uncertainties enter the dynamics through the matrices \(A\) and \(B\) as detailed in \S\ref{sec:param:var:systems}. They enter the inequality constraints in \eqref{eq:SR:pre_SIP}/\eqref{eq:param:OCP} in the form 
\begin{align}
    H(p)\Param \le \ell(p) \quad\text{for all }p \in \pset \nn
\end{align}
and for all other semi-infinite parameters, where \(H(p)\) contains terms of the form \(\epower{A(p)t}\), \(\int_{0}^{\horizon}\epower{A(p)(t-s)}(B(p)\Param \Reg(s) + \Paramw\RegD(s))\odif{s}\), etc. The dependence of \(H(p)\) on \(p\) is, consequently, rather complicated, and certainly not of the ``affine'' type \(H(p) = A'p + b'\) for some matrices \(A'\) and \(b'\). The tools from robust optimization employed in robust signal processing rely on parametric inequalities that are typically ``affine'' in the parameters; as such they do not apply to the context of control problems of the type \eqref{eq:SR:pre_SIP}/\eqref{eq:param:OCP}. In fact, this is a key point of departure of robust control problems of the type \eqref{eq:SR:pre_SIP}/\eqref{eq:param:OCP} from the types of constraints in robust signal processing problems. 
\item Moreover, even arriving at conservative approximations to the constraints in \eqref{eq:SR:pre_SIP} and \eqref{eq:param:OCP} by inequalities of the type \(H(p)\Param \le \ell(p)\) that are affine in \(p\) is nontrivially challenging due to the appearance of \(p\) in the matrix exponential as indicated above, and may lead quickly to infeasibility of the resulting problem. Our results in \S\ref{sec:main_result} and \S\ref{sec:param:var:systems} are, in contrast, exact and devoid of any conservatism. 
\end{enumerate}

\section{Robust minimum attention problem}\label{sec:min:attn}
Minimum attention problems \cite{ref:brockett1997minimum} are optimal control problems where the rate of change in control actions remains bounded. Consider the dynamical system 
\begin{align}\label{eq:max:attn:dyn}
    \dot{\st}(t)  =  A \st(t) + b u(t) + w(t) \quad\text{for a.e. }t \in \lcrc{0}{\horizon}
\end{align}
where \(x(t) \in \Rbb^d\), \(u(t) \in \Rbb\), and \(w(t) \in \Rbb^d\) for all \(t\in \lcrc{0}{\horizon}\); \(A\in \Rbb^{d \times d}\) and \(b\in \Rbb^d\). For simplicity, we do not consider parameter-dependent systems here. Consider the robust version of the minimum attention problem in \cite[\S 3]{MN-DN:20}
\begin{align}
\label{eq:minimum:attention}
&\min_{\cont(\cdot)} && \int_{0}^{\horizon}\norm{\dot{\cont}(t)}_{\ell_1}\odif{t} \\
&  \sbjto		&&  \begin{cases}
\text{dynamics}\,\,\eqref{eq:max:attn:dyn},\,\st(0)=\xz,\,\st(\tfin) \in \finset,\\
\cont(0) = \nu,\, \cont(\tfin) = 0, \abs{\dot{\cont}(t)}\leq r, \nn\\
\text{for all }(t,w(t)) \in \lcrc{\tinit}{\tfin} \times \Wbb.
\end{cases}
\end{align}
We will assume that the OCP \ref{eq:minimum:attention} is well-posed. Introducing a new variable \(\lcrc{0}{\horizon} \ni t \mapsto v(t) \Let \dot{u}(t) \in \Rbb\), we obtain an augmented system
\begin{equation}
\label{eq:minimum:aug:system}
\dot{z}(t)=\bar{A} z(t)+\bar{B} v(t) + \bar{c} \dist(t) \quad\text{for all }t \in \lcrc{0}{\horizon},
\end{equation}
where the matrices in \eqref{eq:minimum:aug:system} are given by
\[
z(t) \Let \begin{pmatrix} \st(t) \\ \cont(t)
\end{pmatrix},
\; \bar{A} \Let \begin{pmatrix}
A & b \\
0 & 0
\end{pmatrix}, \; \bar{B} \Let \begin{pmatrix} 0 \\ 1
\end{pmatrix}, \; \bar{c} \Let \begin{pmatrix}I_d \\ 0 \end{pmatrix},
\]
with \(I_d\) is the standard \(d \times d\) identity matrix. In \eqref{eq:minimum:aug:system}, the control \(\lcrc{0}{\horizon} \ni t \mapsto v(t)\) is computed based on the initial state observation \(x(0) \Let \param\) and the initial control value \(\cont(0) \Let \nu\). Adopting the same parametrization for the control \(v(\cdot)\) and disturbance \(\dist(\cdot)\) as in \S\ref{sec:prob_form}, while retaining the same notations for the associated coefficients and the terminal set, we arrive at the parametrized and regularized CSIP formulation of the OCP \eqref{eq:minimum:attention}
\begin{align}\label{eq:min:attn:SIP}
&\min_{\Param} && \objective(\Param\Reg(\cdot)) + \regupara \regularizer(\Param)\\
&\sbjto && \begin{cases}
\text{dynamics }\eqref{eq:minimum:aug:system},\; z_{1}(0)= \param,\\
z_{2}(0) = \nu, \; z_{1}(T)\in \finset, \; z_{2}(T) = 0 \nn\\
\abs{\Param\Reg(t)} \le r \; \text{for all } (t,\Paramw) \in \lcrc{\tinit}{\tfin} \times \adparamD.
\end{cases}
\end{align}
With a slight abuse of notation, we shall continue to denote the value function associated with \ref{eq:min:attn:SIP} by \(\valuefunc(\cdot,\cdot)\), and for any \(\regupara>0\), refer to the relaxed problem as \(\gfunc(\param,\regupara; \tseq, \bseq)\), following the convention established in \eqref{eq:g_func}, with the necessary adaptations understood. This leads immediately to the following corollary, whose proof proceeds analogously to that of Theorem \ref{thrm:exact:sols}. Note that here \(\dvar \Let N \).
%%%% corollary %%%%
\begin{corr}
\label{corr:exact:sols}
\longthmtitle{Exact solutions for \eqref{eq:min:attn:SIP}}
Fix \(\param \in \fsblset\), \(\ol{\eps}>0\) and consider the OCP \eqref{eq:minimum:attention} and \eqref{eq:min:attn:SIP} along with its data. Suppose that the feasibility hypothesis in Assumption \ref{assum:slater-like} holds with obvious modifications. Define \(\totconset^{\dvar} \Let \lcrc{0}{\horizon}^{\dvar}\times \adparamD^{\dvar}\). Consider the optimization problem
\begin{equation}\label{e:global_max_prob_MinAttn}
\sup_{(\tseq,\bseq)} \aset[\big]{\gfunc(\param,\regupara;\tseq,\bseq) \suchthat (\tseq,\bseq) \in \totconset^{\dvar}}.
\end{equation}
Then \(\totconset^{\dvar} \ni (\tseq,\bseq) \mapsto \gfunc\bigl(\xz,\regupara;\tseq,\bseq\bigr)\) is continuous and there exists \(\bigl(\tseq^{\ast}(\xz,\regupara),\bseq^{\ast}(\xz,\regupara)\bigr)\) that solves \eqref{e:global_max_prob_MinAttn}. Moreover, \(\valuefunc(\xz,\ol{\eps})=\gfunc\bigl(\xz,\regupara;\tseq^{\ast}(\xz,\regupara),\bseq^{\ast}(\xz,\regupara)\bigr)\), and the sequence of optimizers \(\bigl(\Param^{\ast}_{\param,\regupara}\bigr)_{\regupara>0}\) defined by
\begin{align*}
\Param^{\ast}_{\param,\regupara} = & \argmin_{\Param \in \adparam} && \objective(\Param\Reg(\cdot)) + \regupara \regularizer(\Param) \\
        &  \hspace{2mm}\sbjto &&  \begin{cases}\text{the constraints in \eqref{eq:min:attn:SIP}}\\ 
        \text{hold for }\bigl(\tseq^{\ast}(\param,\regupara),\bseq^{\ast}(\param,\regupara)\bigr),
        \end{cases}
\end{align*}
converges as \(\regupara \downarrow 0\) to an optimizer of \eqref{eq:min:attn:SIP} with \(\regupara =0\) in the objective.
\end{corr}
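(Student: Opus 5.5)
The plan is to reduce Corollary~\ref{corr:exact:sols} to Proposition~\ref{lem:exact:sols} and Theorem~\ref{thrm:exact:sols}. The key observation is that \eqref{eq:min:attn:SIP} is an instance of \eqref{eq:regularized:SIP} for the augmented linear system \eqref{eq:minimum:aug:system}. In that instance the state is \(z\in\Rbb^{d+1}\), the input is \(v\) with scalar bound \(r\), and the disturbance enters through \(\bar c\). The equality \(z_2(\horizon)=0\) will be written as the pair of inequalities \(\pm z_2(\horizon)\le 0\). Once this identification is made, the conclusions follow if we re-verify the structural hypotheses \ref{thrm:point:a}--\ref{thrm:point:f} and the two continuity ingredients behind \ref{lemm:exact:sols_1}. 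The decision variable is now \(\Param\in\Rbb^{1\times N}\), so \(\dvar=N\), consistent with \eqref{eq:dvar} when \(\dimcon=1\).

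\textbf{Structural properties.} First, the admissible coefficient set \(\{\Param : \abs{\Param\Reg(t)}\le r \ \forall t\}\) is compact and convex by the argument of Proposition~\ref{prop:comp:conv:adparam:sets} with \(\ubound=r\). Second, using the variation-of-constants formula \eqref{eq:p_sol} with \((\bar A,\bar B,\bar c)\) and the piecewise-constant structure (as in \ref{lemm:point:c}), the map \((\Param,t,\Paramw)\mapsto (z(t),z(\horizon))\) is jointly continuous and affine in \(\Param\). Consequently every constraint set is the preimage of a closed convex set, namely \(\finset\), \(\{0\}\) or \(\lcrc{-r}{r}\), under a map that is affine in \(\Param\). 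The initial conditions \(z_1(0)=\xz\) and \(z_2(0)=\nu\) are built into the solution formula. Third, the objective \(\int_0^\horizon\abs{\Param\Reg(t)}\odif{t}\) is convex and continuous by the computation \eqref{eq:simplified_cost}, and adding \(\regupara\regularizer\) makes it strictly convex. Finally, the index set \(\lcrc{0}{\horizon}\times\adparamD\) is compact. With these facts, existence and uniqueness follow verbatim from Proposition~\ref{prop:unique solution}.

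\textbf{Continuity, exactness, and convergence.} For continuity of \(\gfunc\) and existence of a maximizer of \eqref{e:global_max_prob_MinAttn}, I repeat the Painlev\'e--Kuratowski / Pompeiu--Hausdorff argument from the proof of Proposition~\ref{lem:exact:sols} via \cite[Example 3B.4, Theorems 3B.3, 3B.5]{ref:DonRoc-14}, and then apply Weierstrass on the compact set \(\totconset^{N}\). The identity \(\valuefunc(\xz,\regupara)=\gfunc(\xz,\regupara;\tseq^\ast,\bseq^\ast)\) then follows from \cite[Theorem 1]{ref:DasAraCheCha-22}, exactly as in Theorem~\ref{thrm:exact:sols}. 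Convergence of \(\Param^\ast_{\xz,\regupara}\) as \(\regupara\downarrow0\) follows from \cite[Proposition 3.3]{ref:ParCha-23}.

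\textbf{Main obstacle.} The delicate point is the Slater-type condition ``with obvious modifications.'' The terminal constraint \(z_2(\horizon)=0\) is an equality, so it has no interior, and Assumption~\ref{assum:slater-like} cannot hold literally. I would handle this by eliminating the equality rather than relaxing it. The map \(\Param\mapsto z_2(\horizon)=\nu+\int_0^\horizon \Param\Reg(s)\odif{s}\) is affine and independent of \(\Paramw\), and it does not depend on the semi-infinite index. The plan is therefore to restrict the decision variable to the affine subspace \(\{\Param : \nu+\int_0^\horizon\Param\Reg=0\}\) and reparametrize that subspace with \(N-1\) free coordinates. The modified Slater assumption is then stated relative to this subspace: there is a strictly feasible \(\Param\) with \(\abs{\Param\Reg(t)}<r\) for all \(t\), and \(z_1(\horizon)\in\intr{\finset}\) for all \(\Paramw\). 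On the reduced problem all the cited results apply verbatim, with \(\dvar\) at most \(N\). If needed, the extra sampled indices can be filled with repetitions, since \(\gfunc\) is monotone in the constraint set. This gives exactly the claimed statement.
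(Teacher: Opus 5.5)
Your proposal is correct and follows the same route as the paper. The paper's proof only says the corollary follows analogously to Theorem~\ref{thrm:exact:sols} and Theorem~\ref{thrm:exact:sols:param-var}: re-verify \ref{thrm:point:a}--\ref{thrm:point:f} and the continuity ingredients of Proposition~\ref{lem:exact:sols}, then invoke \cite[Theorem 1]{ref:DasAraCheCha-22} and \cite[Proposition 3.3]{ref:ParCha-23}, which is what you do. Your elimination of the \(\Paramw\)-independent equality \(z_2(\horizon)=0\) (with the Slater condition stated relative to the resulting affine subspace and the sampled indices padded by repetition) is a sound way of making precise what the paper leaves under ``obvious modifications''.
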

%%%%%%%%
\begin{proof}
The proof follows in a manner analogous to that of Theorem \ref{thrm:exact:sols} and Theorem \ref{thrm:exact:sols:param-var} and is therefore omitted.
\end{proof}

\section{Numerical experiments}\label{sec:NumExp}

In this section, we present numerical experiments to demonstrate the applicability of our approach for two continuous-time control problems: the sparse robust control problem (with and without parametric uncertainties) and the noisy attention problem. For the sparse robust control problem, we present our experimental results obtained using the \(\sprob(\cdot,\cdot)\) architecture for the benchmark spring-mass-damper system. The global optimization routine used in \(\sprob(\cdot,\cdot)\) for our experiments was the simulated annealing algorithm \cite[Chapter 13]{ref:OH-02}, \cite{ref:CJPB-92}. The metadata for this was chosen in accordance with Remark~\ref{rem:on stopping criteria}. The noisy attention problem is considered for the spring-mass-damper system with only process noise. All numerical experiments were conducted in Julia v1.1.14 and executed on a laptop equipped with an AMD Ryzen 5 5600H processor and 8 GB of RAM.

\subsection{Sparse robust control}
\subsubsection*{Spring-mass-damper with external disturbance}
\label{sec:smd:dis}
Consider the continuous-time model of the spring-mass-damper system with disturbance~\cite[Chapter 3]{KO:01}
\begin{align}
\label{eq:num:smd}
\dot{\st}(t)  =  \begin{pmatrix}0& 1 \\ 
-2& -0.5
\end{pmatrix}  \st(t) + \begin{pmatrix}
    0 \\ 1
\end{pmatrix} u(t) + w(t)
\end{align}
for all \(t \in \lcrc{0}{\horizon}\). We have the following problem data 
\begin{align}
    \label{eq:num:prob:data}
   \hspace{-2mm} \begin{cases}
       \horizon=5,\,\st(0) \Let (-1, 1)^{\top},\,u(t) \in \lcrc{-1}{1},\\ 
       \st(t) \in \admst \Let  \aset[]{(\dummyx_1,\dummyx_2)^{\top}\in \Rbb^2 \suchthat \abs{\dummyx_1}\leq 0.7},\\ 
       \st(\horizon)\in
       \finset \Let \left\{(\dummyx_1,\dummyx_2)^{\top}\in \Rbb^2 \;\middle\vert\;\begin{array}{@{}l@{}}\abs{\dummyx_1}\leq 0.3 \text{ and}\\ \abs{\dummyx_2}\leq 0.3\end{array}
\right\},\\
        \text{and } w(t) \in \lcrc{-0.4}{0.4}^2\text{ for all } t \in \lcrc{0}{\horizon}.
    \end{cases}
\end{align}

The control and process noise trajectories \(\cont(\cdot)\) and \(w(\cdot)\) for the system \eqref{eq:num:smd} were parametrized using piecewise constant functions, in accordance with Definitions~\ref{defn:discrete_admcon} and~\ref{defn:discrete_admdist} with \(N= M=250\). Recall the expression in \eqref{eq:simplified_cost}; and consider the OCP
\begin{align}
\label{eq:ocp:smd:dis}
&\min_{\Param} && \objective(\Param\Reg(\cdot)) \nn\\
&\sbjto &&\begin{cases}
\text{dynamics }\eqref{eq:num:smd} \text{ and the data \eqref{eq:num:prob:data}},\\
\text{for all }(t,\beta)\in \lcrc{\tinit}{\tfin} \times \adparamD.
\end{cases}
\end{align}
We solved the OCP~\eqref{eq:ocp:smd:dis} employing the \(\sprob(\cdot,\cdot)\) architecture. In Fig.~\ref{fig:msap}, we depict the control trajectories generated for various values of \(\regupara\). Observe that as the value of  \(\regupara\) decreases to zero, the generated control trajectories become closer and closer to each other. The sparse robust control trajectory in Fig. \ref{fig:msap} corresponds to \(\regupara=10^{-6}\) and it can be observed from Fig. \ref{fig:msap} that the control input is sparse and becomes active primarily when the states approach either the path constraint boundaries or the terminal constraint set. The state trajectories corresponding to 10000 distinct realizations of the disturbance, generated under the sparse control strategy, are shown in Fig.~\ref{fig:smd:dis:traj}. In the plot 9990 out of 10000, i.e., \(99.9\%\) of the state trajectories satisfy the state constraints and reach the terminal constraint set robustly. The observed violations stem from unavoidable numerical errors in the computer simulations. We also present several simulation results for a comparative illustration employing the scenario optimization \cite{MC-SG:18}. The method proceeds via generating a finite number of samples of the semi-infinite constraints and then solving the ensuing finitely-constrained OCP via off-the-shelf solvers whose solution is then used as the robust sparse solution to generate the state trajectories. For the illustrations we are considering the spring mass damper with external disturbances (31) with the problem data (32). We used 1000 and 5000 scenarios for the experiments. It can be clearly observed in the plots Fig.~\ref{fig:sc1000} and Fig.~\ref{fig:sc5000} that multiple trajectories have violated the state constraints and terminal constraints.
\begin{figure*}
\begin{subfigure}[t]{0.33\textwidth}
\includegraphics[width=\linewidth]{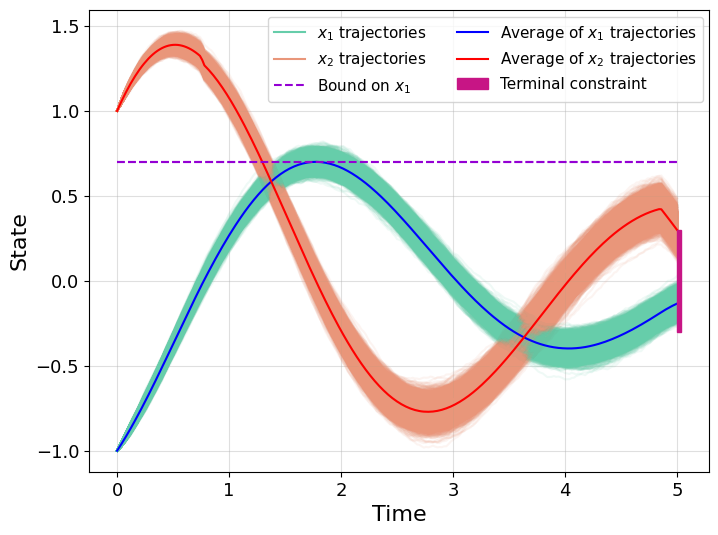}   
\caption{State trajectories with non-robust control.}
\label{fig:smd:no:dis}
\end{subfigure}
\begin{subfigure}[t]{0.33\textwidth}
\includegraphics[width=\linewidth]{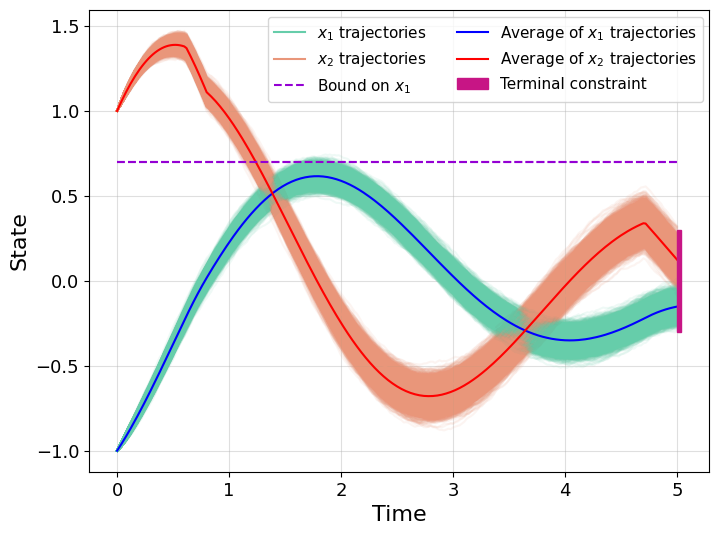}
\caption{State trajectories with the control generated using 1000 scenarios.}
\label{fig:sc1000}
\end{subfigure}
\begin{subfigure}[t]{0.33\textwidth}
\includegraphics[width=\linewidth]{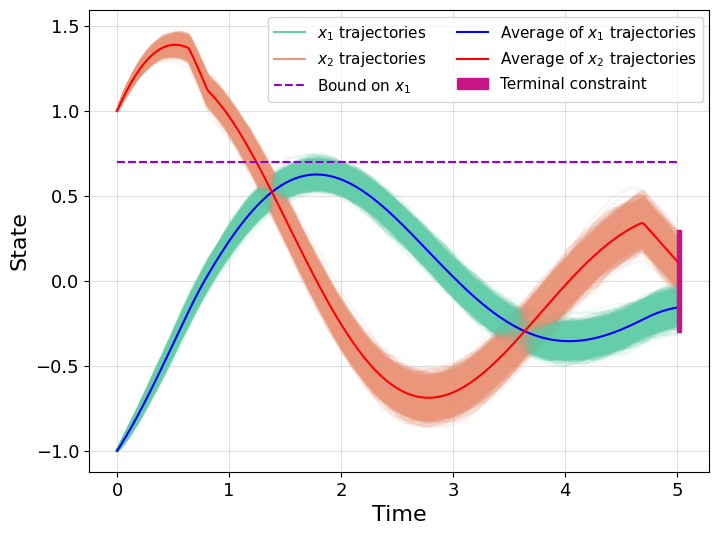}
\caption{State trajectories with the control generated using 5000 scenarios.}
\label{fig:sc5000}
\end{subfigure}
\caption{The left-hand subfigure (Fig.~\ref{fig:smd:no:dis}) illustrates the evolution of 10,000 state trajectories of the spring–mass–damper system~\eqref{eq:num:smd} under non-robust sparse optimal control. The results indicate that neglecting external disturbances leads to violations of the state and terminal constraints. Subfigures Fig.~\ref{fig:sc1000} and Fig.~\ref{fig:sc5000} show the state trajectories obtained using control computed via the scenario approach with 1,000 and 5,000 scenarios, respectively. The scenario-based control significantly reduces constraint violations, but state and terminal constraint violations remain due to the probabilistic nature of the approach.
}
\label{fig:smd:msap}
\end{figure*}

\begin{figure*}
\begin{subfigure}{0.49\textwidth}    
\includegraphics[width=\linewidth]{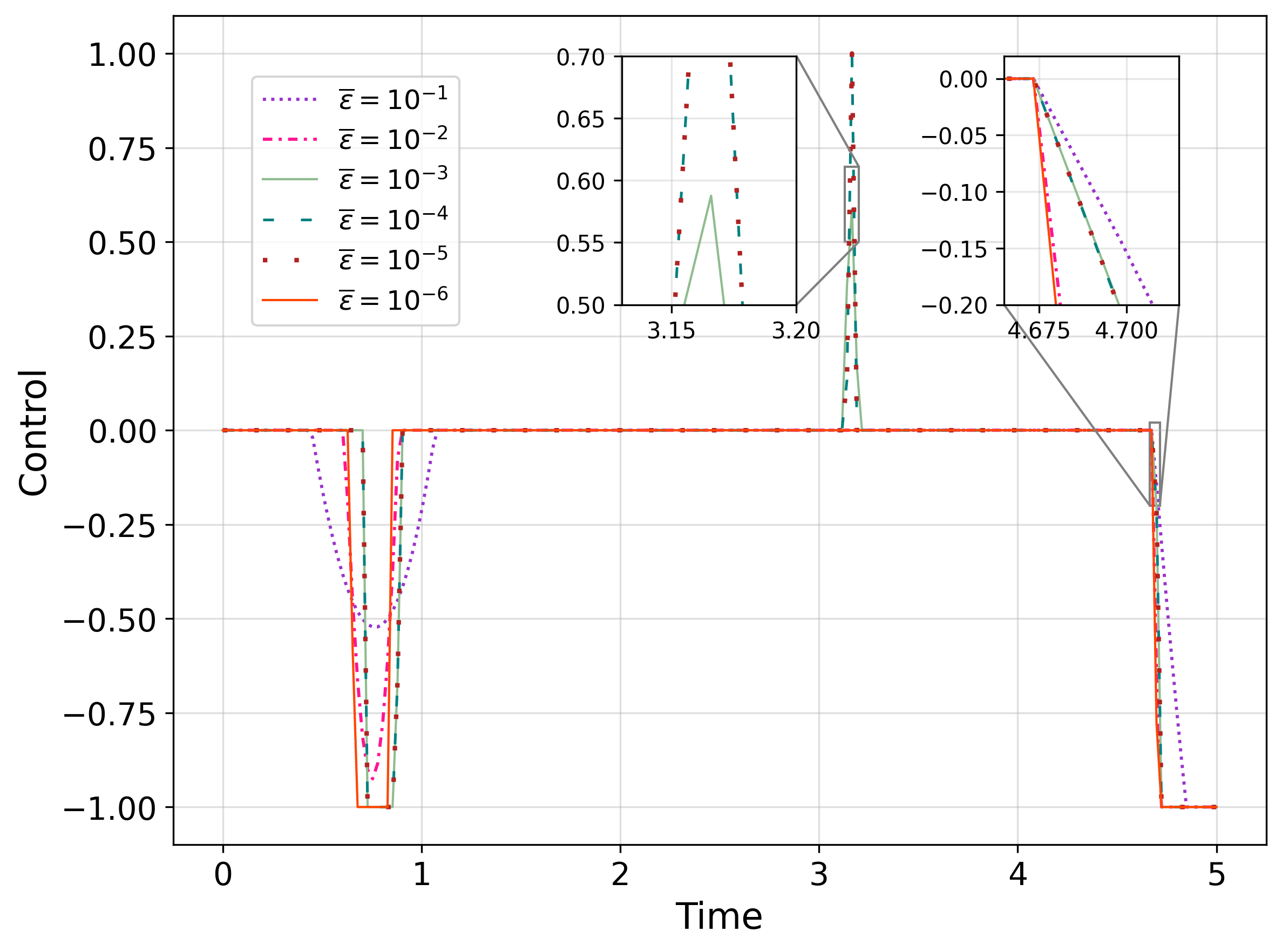}
\caption{Control trajectories with different values of the parameter \(\regupara\).}
\label{fig:msap}
\end{subfigure}
\begin{subfigure}{0.49\textwidth}
% \includegraphics[width=\linewidth]{plots/smd_dis_inp_new.png}       
% \caption{Control trajectory.}
% \label{fig:smd:dis:control}
\includegraphics[width=\linewidth]{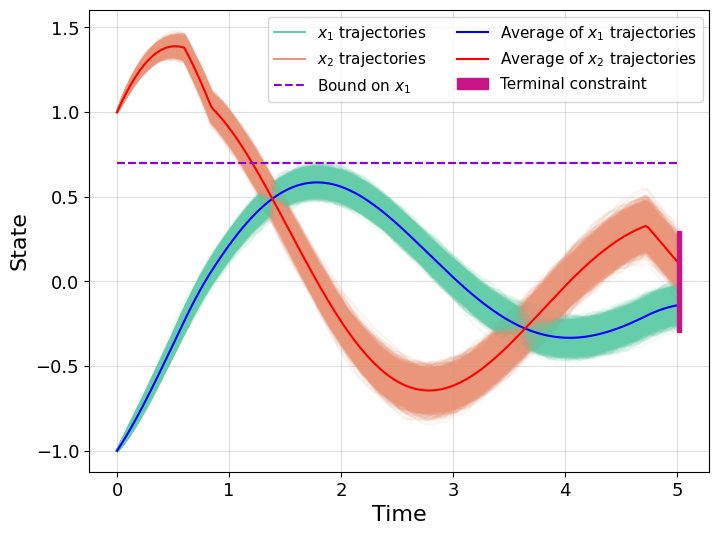}
\caption{State trajectories.}
% \label{fig:smd:dis:control}
\label{fig:smd:dis:traj}
\end{subfigure}
\caption{In Fig.~\ref{fig:msap} (the left-hand subfigure), control trajectories obtained using the \(\sprob(\cdot,\cdot)\) architecture, with different values of the parameter \(\regupara\) are shown. These trajectories correspond to the spring-mass-damper system~\eqref{eq:num:smd}, and are computed by solving the sparse robust problem~\eqref{eq:ocp:smd:dis} for various values of \(\regupara\). As \(\regupara\) decreases, these trajectories exhibit increasing similarity, and a clear trend toward sparsity emerges. In Fig.~\ref{fig:smd:dis:traj}, state trajectories of the spring-mass-damper system~\eqref{eq:num:smd}, generated using 10,000 different realizations of the process noise are shown. The robust control computed by solving~\eqref{eq:ocp:smd:dis} via \(\sprob\bigl((-1, 1)^{\top},10^{-6}\bigr)\) is used. Also, 9990 out of 10000 (i.e., \(99.9\%\)) state trajectories, generated via this sparse control, robustly satisfy both the state and terminal constraints in the presence of process noise. The violations are attributed to inevitable numerical inaccuracies arising from computer simulations.}
\end{figure*}

\subsubsection*{Spring-mass-damper with parametric uncertainties}
\label{sec:smd:par}
Consider the following representation of the spring-mass-damper system containing parametric uncertainties 
\begin{align}
\label{eq:num:smd:par}
\dot{\st}(t)  =  \begin{pmatrix}0& 1 \\ 
-2+p_1& -0.5+p_2
\end{pmatrix}  \st(t) + \begin{pmatrix}
    0 \\ 1+p_3
\end{pmatrix} u(t)
\end{align}
for all \(t \in \lcrc{0}{\horizon}\). We have the following problem data 
\begin{align}
    \label{eq:num:prob:par:data}
    \hspace{-2mm}\begin{cases}
       \horizon=5,\,\st(0) \Let (-1, -1)^{\top},\,u(t) \in \lcrc{-1}{1}, \\ 
       \st(t) \in \admst \Let \aset[]{(\dummyx_1,\dummyx_2)^{\top}\in \Rbb^2 \suchthat \abs{\dummyx_1}\leq 0.5} \\ 
        \st(\horizon)\in
       \finset \Let \left\{(\dummyx_1,\dummyx_2)^{\top}\in \Rbb^2 \;\middle\vert\;\begin{array}{@{}l@{}}\abs{\dummyx_1}\leq 0.1 \text{ and}\\ \abs{\dummyx_2}\leq 0.1\end{array}
\right\},\\
        \text{and } p \in \lcrc{-0.1}{0.1}^3\text{ for all } t \in \lcrc{0}{\horizon},
    \end{cases}
\end{align}
where \(p \Let (p_1,p_2,p_3)^{\top}\) is the tuple that represents the parametric uncertainty. Similar to the previous case, the control trajectory \(\cont(\cdot)\) for the system \eqref{eq:num:smd:par} was parametrized using piecewise constant functions, in accordance with Definition~\ref{defn:discrete_admcon} with \(N=250\). With this parameterization, we considered the OCP
\begin{align}
\label{eq:ocp:smd:par}
&\min_{\Param} && \objective(\Param\Reg(\cdot)) \nn\\
&\sbjto &&\begin{cases}
\text{dynamics }\eqref{eq:num:smd:par} \text{ and the data \eqref{eq:num:prob:par:data}},\\
\text{for all }(t,\Paramw,p) \in \lcrc{\tinit}{\tfin} \times \adparamD \times \pset,
\end{cases}
\end{align}
and employed the \(\sprob(\cdot,\cdot)\) architecture. The results of this numerical experiment are presented in Fig.~\ref{fig:smd:par}. Figure~\ref{fig:smd:par:control} depicts the sparse robust control input, while Fig.~\ref{fig:smd:par:traj} illustrates the state trajectories corresponding to 10000 different realizations of the parametric uncertainty. These plots highlight the sparsity of the control and confirm that the state trajectories satisfy both the state and terminal constraints despite the presence of parametric uncertainties.

\begin{figure*}
\begin{subfigure}{0.49\textwidth}
\includegraphics[width=\linewidth]{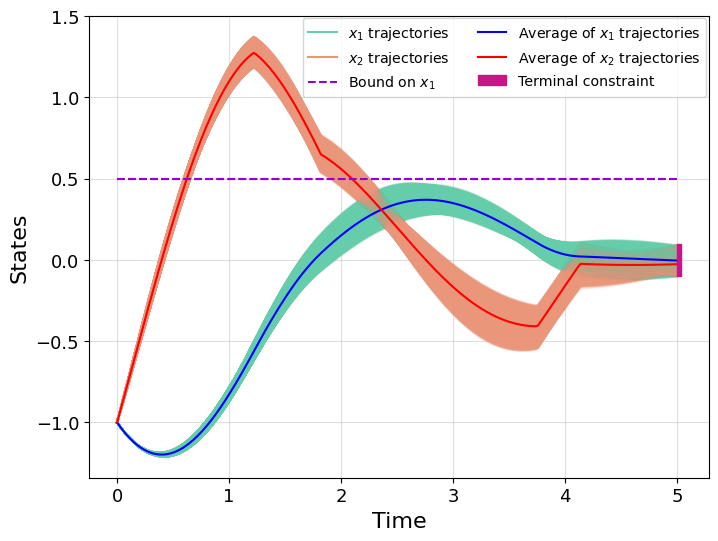}   
\caption{State trajectories.}
\label{fig:smd:par:traj}
\end{subfigure}
\begin{subfigure}{0.49\textwidth}
\includegraphics[width=\linewidth]{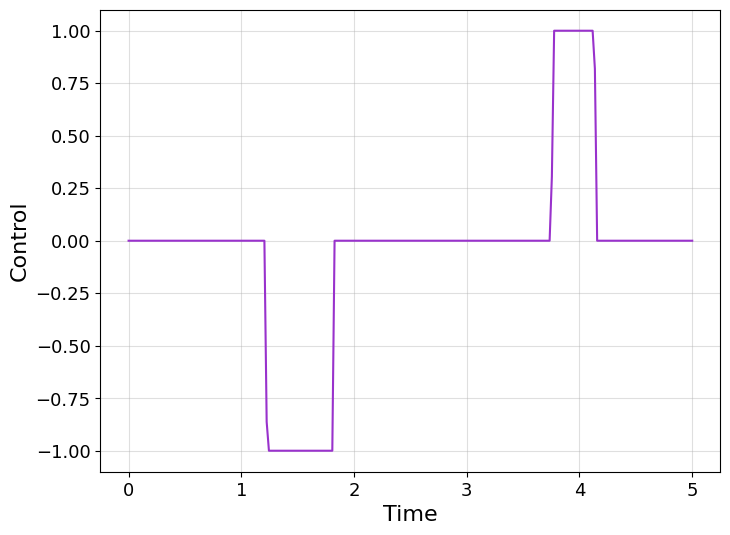}    
\caption{Control trajectory.}
\label{fig:smd:par:control}       
\end{subfigure}
\caption{The state trajectories of the spring-mass-damper system~\eqref{eq:num:smd:par}, subject to 10000 different realizations of the uncertain parameter, are shown in Fig.~\ref{fig:smd:par:traj}. The system is controlled using the sparse robust control computed by solving~\eqref{eq:ocp:smd:par} via \(\sprob\bigl((-1, -1)^{\top},10^{-6}\bigr)\) architecture. The figure illustrates the effectiveness of the proposed approach in generating sparse robust control inputs for the system with data~\eqref{eq:num:prob:par:data}. As observed from Fig.~\ref{fig:smd:par:control}, the control input is sparse, i.e., being active only over a small portion of the time horizon. Despite the presence of parametric uncertainties, the resulting state trajectories satisfied both the state and terminal constraints, demonstrating the robustness of the proposed control scheme.}
\label{fig:smd:par}
\end{figure*}

\subsection{Robust minimum attention problem}\label{sec:NumExp:sparse:minattn}
For the minimum attention problem described in \S\ref{sec:min:attn}, we consider the following augmented dynamics of the spring-mass-damper system in~\eqref{eq:num:smd}, in accordance with the formulation in~\eqref{eq:minimum:aug:system}
\begin{equation}
\label{eq:num:min}
\dot{z}(t)=\begin{pmatrix}
 0 & 1 & 0 \\ -2& -0.5& 1 \\ 0& 0& 0
\end{pmatrix} z(t)+\begin{pmatrix}
0 \\ 0\\ 1
\end{pmatrix} v(t) + \begin{pmatrix}
1 & 0 \\ 0 & 1\\ 0 & 0
\end{pmatrix} \dist(t)
\end{equation}
for all \(t \in \lcrc{0}{\horizon}\), where \(t \mapsto z(t) \Let (\st_1(t), \st_1(t), u(t))^{\top}\). For this problem we define \(\finset \Let \aset[]{(\dummyx_1,\dummyx_2)^{\top}\in \Rbb^2 \suchthat \abs{\dummyx_1}\leq 0.25 \text{ and }\abs{\dummyx_2}\leq 0.25}\), keeping all other problem data from~\eqref{eq:num:prob:data} intact. Moreover, a bound was imposed on the input rate of change, \(\abs{\dot{\cont}(t)}\leq 2\). The trajectories for the rate of change in input \(v(\cdot)\) and the disturbance \(w(\cdot)\) were parameterized according to the Definition~\ref{defn:discrete_admcon} and Definition~\ref{defn:discrete_admdist}. This leads to the following OCP, which was solved using the \(\sprob(\cdot,\cdot)\) architecture
\begin{align}
\label{eq:min:attn:SIP_ne}
&\min_{\Param} && \objective(\Param\Reg(\cdot)) \\
&\sbjto && \begin{cases}
\text{dynamics }\eqref{eq:num:min},\; z(0)= (-1,1,0)^{\top},\\
(z_{1}(T),z_{2}(T))^{\top}\in \finset, \; z_{3}(T) = 0, \\
\st(\horizon) \in \finset,\,|\Reg(t) \Param| \le 2,\\ \text{for all } (t,\Paramw) \in \lcrc{\tinit}{\tfin} \times \adparamD.
\end{cases} \nn
\end{align}
Fig.~\ref{fig:atten:rate} shows the trajectory of the rate of change of the control input, corresponding to \(\regupara=10^{-6}\). Given that the goal of the minimum attention problem is to induce sparsity in the control rate signal, Fig. \ref{fig:atten:rate} demonstrates the emergence of such sparse behavior. The control trajectory, shown in Fig. \ref{fig:atten:input}, was used to simulate the response of the spring-mass-damper system \eqref{eq:num:smd} under 10000 different realizations of the disturbance signal. The resulting state trajectories are presented in Fig.~\ref{fig:atten:traj}.
\begin{figure*}
\begin{subfigure}{0.33\textwidth}
\includegraphics[width=\linewidth]{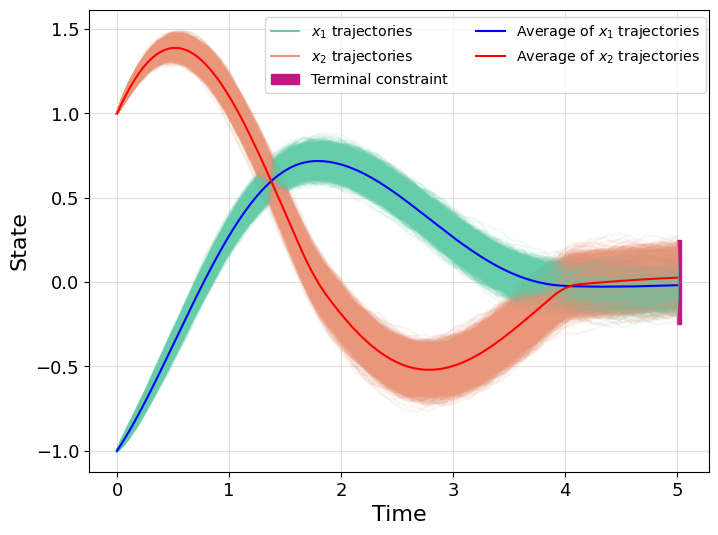}
\caption{State trajectories.}
\label{fig:atten:traj}
\end{subfigure}
\begin{subfigure}{0.33\textwidth}
\includegraphics[width=\linewidth]{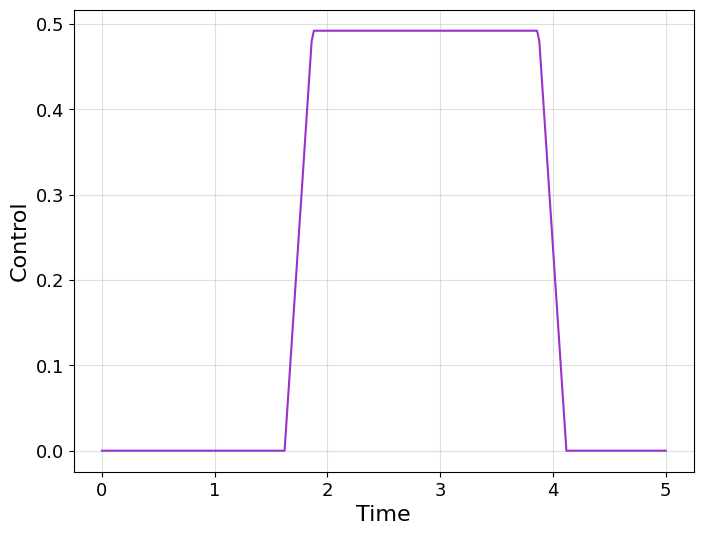} 
\caption{Control trajectory.}
\label{fig:atten:input}
\end{subfigure}
\begin{subfigure}{0.33\textwidth}
\includegraphics[width=\linewidth]{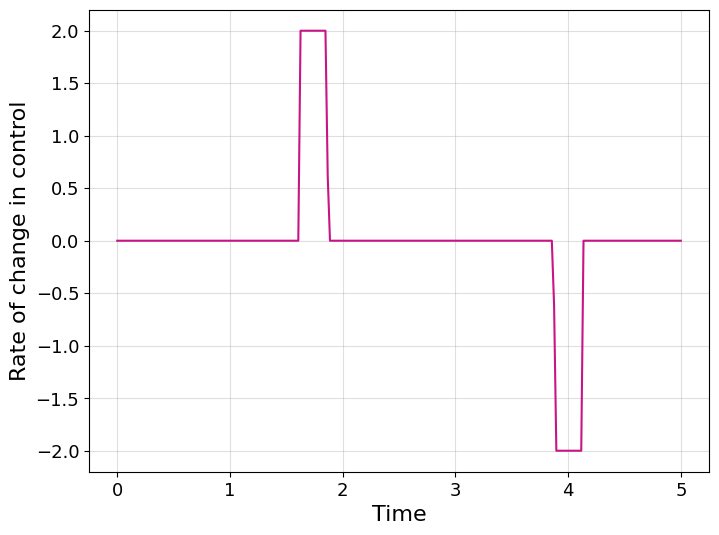}
\caption{Rate of change in control.}
\label{fig:atten:rate}
\end{subfigure}
\caption{Fig.~\ref{fig:atten:traj} shows the evolution of 10000 trajectories generated for the spring-mass-damper system~\eqref{eq:num:smd} with data~\eqref{eq:num:prob:data} using the control obtained by solving the minimum attention problem~\eqref{eq:min:attn:SIP_ne} via the~\(\sprob\bigl((-1,1)^{\top},10^{-6}\bigr)\) approach. This control is depicted in Fig.~\ref{fig:atten:input} and the corresponding rate of change in control is is shown in Fig.~\ref{fig:atten:rate}. The attention problem demands sparsity in the rate of change in control, this can be can be observed in Fig.~\ref{fig:atten:rate}.}
\label{fig:atten}
\end{figure*}

\section{Conclusion}
\label{sec:conclusion}
This article introduced an algorithmic framework for constrained sparse robust optimal control, which offered strong theoretical guarantees of exact solutions and ensured satisfaction of an uncountable family of constraints in a computationally viable fashion. We demonstrated our results for a class of noisy parameter-dependent systems and noisy minimum attention control problems. Numerical illustrations on a benchmark control problem demonstrated the effectiveness of our technique. Possible future directions are: (a) developing faster gradient-based algorithms; (b) investigating other sparsity promoting structures \cite{ref:Ike-24}, for example quasi-convex norms.

% references %
\bibliographystyle{ieeetr}
\bibliography{refs}

@book{ref:DonRoc-14,
    AUTHOR = {A. L. Dontchev and R. T. Rockafellar},
     TITLE = {Implicit {F}unctions and {S}olution {M}appings},
    SERIES = {Springer Series in Operations Research and Financial Engineering},
   EDITION = {2nd},
 PUBLISHER = {Springer, New York},
      YEAR = {2014},
     PAGES = {xxviii+466},
      NOTE = {doi: \url{https://doi.org/10.1007/978-1-4939-1037-3}},
}

@misc{ref:SidKen-26,
      title={Robust maximum hands-off optimal control: existence, maximum principle, and ${L}^{0}$-${L}^1$ equivalence}, 
      author={S. Ganguly and K. Kashima},
      year={2026},
      eprint={2601.07256},
      archivePrefix={arXiv},
      primaryClass={math.OC},
      NOTE = {doi: \url{
https://doi.org/10.48550/arXiv.2601.07256}}, 
}

@article{ref:ExaEvaPan-18,
  title={Stochastic \({L}^1\)-optimal control via forward and backward sampling},
  author={I. Exarchos and E. A. Theodorou and P. Tsiotras},
  journal={Systems \& Control Letters},
  volume={118},
  pages={101--108},
  year={2018},
  publisher={Elsevier},
  NOTE = {doi: \url{https://doi.org/10.1016/j.sysconle.2018.06.005}}
}

@book {ref:MV:20,
    AUTHOR = {M. Vidyasagar},
     TITLE = {An {I}ntroduction to {C}ompressed {S}ensing},
    SERIES = {Computational Science \& Engineering},
    VOLUME = {22},
 PUBLISHER = {Society for Industrial and Applied Mathematics (SIAM),
              Philadelphia, PA},
      YEAR = {[2020] \copyright 2020},
     PAGES = {xi+341},
      ISBN = {978-1-611976-11-3},
   MRCLASS = {94-02 (05C90 15A83 65F99 94A12)},
  MRNUMBER = {4050519},
}

@book{ref:FolReal-99,
  title={Real {A}nalysis: {M}odern {T}echniques and their {A}pplications},
  author={G. B. Folland},
  year={1999},
  publisher={John Wiley \& Sons}
}

@article{ref:DasAraCheCha-22,
	author = {S. Das and A. Aravind and A. Cherukuri and D. Chatterjee},
	title = {Near-optimal solutions of convex semi-infinite programs via targeted sampling},
	journal = {Annals of Operations Research},
	note = {doi: \url{https://doi.org/10.1007/s10479-022-04810-4}},
	year = {2022},
}

@article{ref:ParCha-23,
title = {Attaining the Chebyshev bound for optimal learning: A numerical algorithm},
journal = {Systems \& Control Letters},
volume = {181},
number = {105648},
year = {2023},
issn = {0167-6911},
note = {doi: \url{https://doi.org/10.1016/j.sysconle.2023.105648}},
author = {P. Paruchuri and D. Chatterjee}
}

@INPROCEEDINGS{ref:SG:NR:DC:RB-22,
  author={S. Ganguly and N. Randad and D. Chatterjee and R. Banavar},
  booktitle={2022 IEEE 61st Conference on Decision and Control (CDC)}, 
  title={Constrained trajectory synthesis via quasi-interpolation}, 
  year={2022},
  volume={},
  number={},
  pages={4533-4538},
NOTE = {doi: \url{https://doi.org/10.1109/CDC51059.2022.9992892}}
}

@ARTICLE{ref:gansilcha24,
   author={S. Ganguly and R. A. D'Silva and D. Chatterjee},
  journal={IEEE Transactions on Automatic Control}, 
  title={{Q}u{ITO} v.2: Numerical Solutions with Uniform Error Guarantees to Optimal Control Problems under Path Constraints}, 
  year={2026},
  volume={},
  number={},
  pages={1-16},
 }

@book{ref:betts-book,
    author = {J. T. Betts},
     title = {Practical {M}ethods for {O}ptimal {C}ontrol and {E}stimation {U}sing {N}onlinear {P}rogramming},
    series = {Advances in Design and Control},
    volume = {1},
 publisher = {SIAM},
     pages = {xiv+434},
     year = {2010},
NOTE = {doi: \url{https://doi.org/10.1137/1.9780898718577}}
}

@article{ref:MN:MaxHandsOff,
  title={Maximum hands-off control: a paradigm of control effort minimization},
  author={M. Nagahara and D. E. Quevedo and D. Ne{\v{s}}i{\'c}},
  journal={IEEE Transactions on Automatic Control},
  volume={61},
  number={3},
  pages={735--747},
  year={2015},
  publisher={IEEE},
NOTE = {doi: \url{https://doi.org/10.1109/TAC.2015.2452831}}
}

@article{ref:DC:MaxHandsOff:SCL,
  title={Characterization of maximum hands-off control},
  author={D. Chatterjee and M. Nagahara and D. E. Quevedo and KS. M. Rao},
  journal={Systems \& Control Letters},
  volume={94},
  pages={31--36},
  year={2016},
  publisher={Elsevier},
NOTE = {doi: \url{https://doi.org/10.1016/j.sysconle.2016.05.002}}
}

@article{ref:SparseApp:Space,
  title={Terminal spacecraft rendezvous and capture with {LASSO} model predictive control},
  author={E. N. Hartley and M. Gallieri and J. M. Maciejowski},
  journal={International Journal of Control},
  volume={86},
  number={11},
  pages={2104--2113},
  year={2013},
  publisher={Taylor \& Francis},
NOTE = {doi: \url{https://doi.org/10.1080/00207179.2013.789608}}
}

@article{ref:SparseApp:NetworkPackets,
  title={Sparse packetized predictive control for networked control over erasure channels},
  author={M. Nagahara and D. E. Quevedo and J. {\O}stergaard},
  journal={IEEE Transactions on Automatic Control},
  volume={59},
  number={7},
  pages={1899--1905},
  year={2013},
  publisher={IEEE},
NOTE = {doi: \url{https://doi.org/10.1109/TAC.2013.2294622}}
}

@inproceedings{ref:SparseApp:SensingFeedDisgn,
  title={Sparsity based feedback design: A new paradigm in opportunistic sensing},
  author={S. Bhattacharya and T. Ba{\c{s}}ar},
  booktitle={Proceedings of the 2011 American Control Conference},
  pages={3704--3709},
  year={2011},
  organization={IEEE},
NOTE = {doi: \url{https://doi.org/10.1109/ACC.2011.5991014}}
}

@article{ref:SparseApp:PDEs:I,
  title={Approximation of sparse controls in semilinear equations by piecewise linear functions},
  author={E. Casas and R. Herzog and G. Wachsmuth},
  journal={Numerische Mathematik},
  volume={122},
  number={4},
  pages={645--669},
  year={2012},
  publisher={Springer},
NOTE = {doi: \url{https://doi.org/10.1007/s00211-012-0475-7}}
}

@article{ref:SparseApp:PDE:II,
  title={Second-order and stability analysis for state-constrained elliptic optimal control problems with sparse controls},
  author={E. Casas and F.Tr\'{o}ltzsch},
  journal={SIAM Journal on Control and Optimization},
  volume={52},
  number={2},
  pages={1010--1033},
  year={2014},
  publisher={SIAM},
NOTE = {doi: \url{https://doi.org/10.1137/130917314}}
}

@article{ref:SparseApp:Donoho,
  title={Compressed sensing},
  author={D. L. Donoho},
  journal={IEEE Transactions on Information Theory},
  volume={52},
  number={4},
  pages={1289--1306},
  year={2006},
  publisher={IEEE},
NOTE = {doi: \url{https://doi.org/10.1109/TIT.2006.871582}}
}

@book{ref:SparseBook:Vidyasagar,
  title={An {I}ntroduction to {C}ompressed {S}ensing},
  author={M. Vidyasagar},
  year={2019},
  publisher={SIAM},
NOTE = {doi: \url{https://doi.org/10.1137/1.9781611976120}}
}

@book{ref:SparseBook:Unser,
  title={An {I}ntroduction to {S}parse {S}tochastic {P}rocesses},
  author={M. Unser and P. D. Tafti},
  year={2014},
  publisher={Cambridge University Press},
NOTE = {doi: \url{https://doi.org/10.1017/CBO9781107415805}}
}

@article{SparseBook:ML:StatLearning,
  title={Statistical {L}earning with {S}parsity: {T}he {LASSO} and {G}eneralizations},
  author={T. Hastie and R. Tibshirani and M. Wainwright},
  journal={Monographs on Statistics and Applied Probability},
  volume={143},
  pages={8},
  year={2015},
NOTE = {doi: \url{https://doi.org/10.1201/b18401}}
}

@inproceedings{ref:Scenario:zhang2022sparse,
  title={Sparse robust control design via scenario program},
  author={Z. Zhang and Y. Fujisaki},
  booktitle={Proceedings of the ISCIE International Symposium on Stochastic Systems Theory and its Applications},
  pages={61--64},
  year={2022},
NOTE = {doi: \url{https://doi.org/10.5687/sss.2022.61}}
}

@INPROCEEDINGS{MN-DN:20,
  author={M. Nagahara and D. Nešić},
  booktitle={2020 59th IEEE Conference on Decision and Control (CDC)}, 
  title={An Approach to Minimum Attention Control by Sparse Derivative}, 
  year={2020},
  volume={},
  number={},
  pages={5005-5010},
  NOTE = {doi: \url{https://doi.org/10.1109/CDC42340.2020.9303783}}
}

@inproceedings{ref:brockett1997minimum,
  title={Minimum attention control},
  author={Brockett, W},
  booktitle={Proceedings of the 36th IEEE Conference on Decision and Control},
  volume={3},
  pages={2628--2632},
  year={1997},
  organization={IEEE},
NOTE = {doi: \url{https://doi.org/10.1109/CDC.1997.657776}}
}

@book {ref:VAZZ-16,
    AUTHOR = {V. A. Zorich},
     TITLE = {Mathematical {A}nalysis. {II}},
    SERIES = {Universitext},
   EDITION = {2nd Edition},
 PUBLISHER = {Springer-Verlag, Berlin},
      YEAR = {2016},
     PAGES = {xx+720},
      ISBN = {978-3-662-56966-5},
      NOTE = {doi: \url{https://link.springer.com/book/10.1007/978-3-662-48993-2}},
}

@book{ref:MH-book-20,
  title={Sparsity {M}ethods for {S}ystems and {C}ontrol},
  author={M. Nagahara},
  year={2020},
  publisher={Now Publishers},
NOTE = {doi: \url{http://dx.doi.org/10.1561/9781680837254}}
}

@book {ref:Rud-Analysis,
    AUTHOR = {Rudin, Walter},
     TITLE = {{P}rinciples of {M}athematical {A}nalysis},
    SERIES = {International Series in Pure and Applied Mathematics},
   EDITION = {Third},
PUBLISHER = {McGraw-Hill Book Co., New York-Auckland-D\"usseldorf},
      YEAR = {1976},
     PAGES = {x+342},
   MRCLASS = {26-02},
  MRNUMBER = {385023},
}

@book{KO:01,
author = {K. Ogata},
title = {Modern Control Engineering},
year = {2001},
publisher = {Prentice Hall PTR},
address = {USA},
edition = {4th},
NOTE = {URL: \url{https://dl.acm.org/doi/10.5555/516039}}
}

@article{ref:CJPB-92,
  title={Convergence theorems for a class of simulated annealing algorithms on \(\mathbb{R}^d\)},
  author={C. J. P. B{\'e}lisle},
  journal={Journal of Applied Probability},
  volume={29},
  number={4},
  pages={885--895},
  year={1992},
  publisher={Cambridge University Press},
NOTE = {doi: \url{https://doi.org/10.2307/3214721}}
}

@book{ref:OH-02,
  title={Finite {M}arkov {C}hains and {A}lgorithmic {A}pplications},
  author={O. Haggstrom},
  series={London Mathematical Society Student Texts},
  year={2002},
  publisher={Cambridge University Press},
NOTE = {doi: \url{https://doi.org/10.1017/CBO9780511613586}}
}

@InProceedings{ref:BarGabVal-19,
  title = 	 {A simple parameter-free and adaptive approach to optimization under a minimal local smoothness assumption},
  author =       {P. L. Bartlett and V. Gabillon and M. Valko},
  booktitle = 	 {Proceedings of the 30th International Conference on Algorithmic Learning Theory},
  pages = 	 {184--206},
  year = 	 {2019},
  editor = 	 {Garivier, Aurélien and Kale, Satyen},
  volume = 	 {98},
  series = 	 {Proceedings of Machine Learning Research},
  month = 	 {22--24 Mar},
  publisher =    {PMLR},
 
  NOTE = 	 {doi: \url{https://proceedings.mlr.press/v98/bartlett19a.html}}
}

@article{ref:FiaIsh-90,
    AUTHOR = {A. V. Fiacco and {Yo}. Ishizuka},
     TITLE = {Sensitivity and stability analysis for nonlinear programming},
   JOURNAL = {Annals of Operations Research},
    VOLUME = {27},
      YEAR = {1990},
    NUMBER = {1-4},
     PAGES = {215--235},
      NOTE = {doi: \url{https://doi.org/10.1007/BF02055196}},
}

@inproceedings{ref:GriValMun-15,
 author = {J-B. Grill and M. Valko and R. Munos},
 booktitle = {Advances in Neural Information Processing Systems},
 editor = {C. Cortes and N. Lawrence and D. Lee and M. Sugiyama and R. Garnett},
 pages = {},
 publisher = {Curran Associates, Inc.},
 title = {Black-box optimization of noisy functions with unknown smoothness},
NOTE ={URL: \url{https://proceedings.neurips.cc/paper_files/paper/2015/file/ab817c9349cf9c4f6877e1894a1faa00-Paper.pdf}},
 volume = {28},
 year = {2015},
}

@InProceedings{ref:MalVay-17,
title = 	 {Global optimization of {L}ipschitz functions},
author =       {C. Malherbe and N. Vayatis},
pages = 	 {2314--2323},
year = 	 {2017},
editor = 	 {Precup, Doina and Teh, Yee Whye},
volume = 	 {70},
booktitle = 	 {Proceedings of Machine Learning Research},
publisher =    {PMLR},
NOTE = {URL: \url{https://proceedings.mlr.press/v70/malherbe17a.html}},
}

@book{ref:BenElGNem-09,
    AUTHOR = {A. Ben-Tal and L. El Ghaoui and A. Nemirovski},
     TITLE = {Robust {O}ptimization},
    SERIES = {Princeton Series in Applied Mathematics},
PUBLISHER = {Princeton University Press, Princeton, NJ},
      YEAR = {2009},
     PAGES = {xxii+542},
      NOTE = {doi: \url{https://doi.org/10.1515/9781400831050}},
}

@book{ref:DullPag-13,
  title={A {C}ourse in {R}obust {C}ontrol {T}heory: {A} {C}onvex {A}pproach},
  author={G. E. Dullerud and F. Paganini},
  volume={36},
  year={2013},
  publisher={Springer Science \& Business Media}, 
Series = {Texts in Applied Mathematics},
NOTE = {doi: \url{https://doi.org/10.1007/978-1-4757-3290-0}}
}

@book{ref:BoyElGFerBal-94,
    AUTHOR = {S. Boyd and L. El Ghaoui and E. Feron and
              V. Balakrishnan},
     TITLE = {Linear {M}atrix {I}nequalities in {S}ystem and {C}ontrol {T}heory},
    SERIES = {SIAM Studies in Applied Mathematics},
    VOLUME = {15},
PUBLISHER = {Society for Industrial and Applied Mathematics (SIAM),
              Philadelphia, PA},
      YEAR = {1994},
     PAGES = {xii+193},
      NOTE = {doi: \url{https://doi.org/10.1137/1.9781611970777}},
}

@book{ref:AliTsa-21,
    AUTHOR = {A. R. Alimov and I. G. Tsar'kov},
     TITLE = {Geometric {A}pproximation {T}heory},
    SERIES = {Springer Monographs in Mathematics},
 PUBLISHER = {Springer, Cham},
      YEAR = {2021},
     PAGES = {xxi+508},
      NOTE = {doi: \url{https://doi.org/10.1007/978-3-030-90951-2}},
}

@article{ref:EldYonBenNem-04,
  title={Robust mean-squared error estimation in the presence of model uncertainties},
  author={Y. C. Eldar and A. Ben-Tal and A. Nemirovski},
  journal={IEEE Transactions on Signal Processing},
  volume={53},
  number={1},
  pages={168--181},
  year={2004},
  publisher={IEEE},
NOTE = {doi: \url{https://doi.org/10.1109/TSP.2004.838933}}
}

@article{ref:XiaGuiXin-14,
  title={Robust compressed sensing with bounded and structured uncertainties},
  author={X. Qing and G. Hu and X. Wang},
  journal={IET Signal Processing},
  volume={8},
  number={7},
  pages={783--791},
  year={2014},
  publisher={Wiley Online Library},
NOTE = {doi: \url{https://doi.org/10.1049/iet-spr.2013.0260}}
}

@article{ref:PolSch-05,
  title={Hard problems in linear control theory: Possible approaches to solution},
  author={B. T. Polyak and P.S. Shcherbakov},
  journal={Automation and Remote Control},
  volume={66},
  number={5},
  pages={681--718},
  year={2005},
  publisher={Springer},
NOTE = {doi: \url{https://doi.org/10.1007/s10513-005-0115-0}}
}

@article{ref:PolakMayneSIPsInControl,
  title={Control system design via semi-infinite optimization: a review},
  author={E. Polak and D. Q. Mayne and D. M. Stimler},
  journal={Proceedings of the IEEE},
  volume={72},
  number={12},
  pages={1777--1794},
  year={1984},
  publisher={IEEE},
NOTE = {doi: \url{https://doi.org/10.1109/PROC.1984.13086}}
}

@ARTICLE{ref:Ike-24,
  author={T. Ikeda},
  journal={IEEE Transactions on Automatic Control}, 
  title={Nonconvex Optimization Problems for Maximum Hands-Off Control}, 
  year={2025},
  volume={70},
  number={3},
  pages={1905-1912},
  NOTE = {doi: \url{10.1109/TAC.2024.3474061}}
}

@article{ref:ItoIkeKashi-21,
title = {Sparse optimal stochastic control},
journal = {Automatica},
volume = {125},
pages = {109438},
year = {2021},
issn = {0005-1098},
author = {K. Ito and T. Ikeda and K. Kashima},
NOTE = {doi: \url{https://doi.org/10.1016/j.automatica.2020.109438}},
}

@article{ref:minmax:ness:vinter,
  title={Minimax optimal control},
  journal={SIAM journal on control and optimization},
author = {R. B. Vinter},
  volume={44},
  number={3},
  pages={939--968},
  year={2005},
  publisher={SIAM},
NOTE = {doi: \url{https://doi.org/10.1137/S0363012902415244}},
}

@article{ref:elango2024successive,
title = {Continuous-time successive convexification for constrained trajectory optimization},
journal = {Automatica},
volume = {180},
pages = {112464},
year = {2025},
author={P. Elango and D. Luo and A. G. Kamath and S. Uzun and T. Kim and B. B. A{\c{c}}{\i}kme{\c{s}}e}
}

@inproceedings{ref:SD:SG:AM:DC:CtmpcCDC,
  title={Towards continuous-time {MPC}: A novel trajectory optimization algorithm},
  author={S. Das and S. Ganguly and A. Muthyala and D. Chatterjee},
  booktitle={2023 62nd IEEE Conference on Decision and Control (CDC)},
  pages={3276--3281},
  year={2023},
NOTE = {doi: \url{10.1109/CDC49753.2023.10383236}},
}

@inproceedings{ref:fazlyab2016interior,
  title={Interior point method for dynamic constrained optimization in continuous time},
  author={M. Fazlyab and S. Paternain and V. M. Preciado and A. Ribeiro},
  booktitle={2016 American Control Conference (ACC)},
  pages={5612--5618},
  year={2016},
  organization={IEEE},
NOTE = {doi: \url{https://doi.org/10.1109/ACC.2016.7526550}},
}

@book{MC-SG:18,
    AUTHOR = {M. C. Campi and S. Garatti},
    TITLE = {Introduction to the Scenario Approach},
    SERIES = {MOS-SIAM Series on Optimization},
    VOLUME = {26},
    PUBLISHER = {Society for Industrial and Applied Mathematics (SIAM), Philadelphia, PA;
                 Mathematical Optimization Society, Philadelphia, PA},
    YEAR = {2018},
    PAGES = {viii+116},
    ISBN = {978-1-61197-544-4},
    MRCLASS = {90C15 (62H30 90-02 93-02 93E35)},
    MRNUMBER = {3909429},
    MRREVIEWER = {Kurt Marti},
    NOTE = {doi: \url{10.1137/1.9781611975444}},
}

% \appendices

\section*{Appendix A: Proof of Theorem \ref{thrm:exact:sols:param-var}}
\label{appendix:A}
\begin{proof}
The continuity of \(\gfunc_p\bigl(\xz,\regupara;\cdot,\cdot,\cdot\bigr)\) for fixed \(\param \in \fsblset\) and \(\regupara>0\) follows from similar arguments furnished in Proposition \ref{lem:exact:sols}. Indeed, \ref{lemm:point:a}, \ref{lemm:point:b}, \ref{lemm:point:d}, \ref{lemm:point:f} in the proof of Proposition \ref{lem:exact:sols} holds with obvious modifications. Recall that the solution of \eqref{eq:param:sys}, after parameterizing the control and the disturbance appropriately, admits the compact form
    \begin{align*}
        & t \mapsto \st(t;\xz,\Param,\Paramw,p)\\
        & = \epower{A(p)t}\overline{x} + \sum_{i=1}^{K}\biggl(\int_{s_{i-1}}^{s_i}\epower{A(p)(t-s)}\odif{s}\biggr)(B(p)\Param\Reg_i+\Paramw\RegD_i)
    \end{align*}
and thus, the joint continuity of the map
    \begin{align*}
        \adparam \times \lcrc{0}{\horizon} \times \adparamD \times \pset \ni (\Param,t,\Paramw,p) \mapsto \begin{pmatrix}
        \st(t; \xz,\Param, \Paramw,p)\\
        \st(\horizon; \xz,\Param, \Paramw,p)
   \end{pmatrix},
    \end{align*}
    is guaranteed in view of the continuity of 
     \begin{align*}
         (\Param,t,\Paramw,p) \mapsto \begin{pmatrix}
             A(p),
             B(p),
             \epower{A(p)t}
         \end{pmatrix}. 
     \end{align*}
Thus, we have the analogous argument of \ref{lemm:point:c} in the proof of Proposition \ref{lem:exact:sols}. Concerning \ref{lemm:point:e}, observe that for fixed \((\ol{\tseq},\ol{\bseq},\ol{\pseq})\), \(\param\in \fsblset\), \(\regupara>0\), and \(i=1,2,\ldots, \dvar\), the sets \(\aset[]{\Param \in \adparam \suchthat \Param \Reg(t^i) \in \admcont}\), \(\aset[]{\Param \in \adparam \suchthat \st(t^i;\param,\Param^i,\Paramw^i,p^i) \in \admst}\), and \(\aset[]{\Param \in \adparam \suchthat \st(\horizon;\param,\Param^i,\Paramw^i,p^i) \in \finset}\) are convex because the sets \(\admcont\), \(\admst\), and \(\finset\) are convex and the maps \(\Param \mapsto \Param\Reg(t^i)\), \(\Param \mapsto x(t^i;\param,\Param^i,\Paramw^i,p^i)\), and \(\Param \mapsto x(T;\param,\Param^i,\Paramw^i,p^i)\) are, for each \(i\in \aset[]{1,\ldots,\dvar}\), linear and thus continuous and convex. The continuity of \(\gfunc_p\bigl(\xz,\regupara;\cdot,\cdot,\cdot\bigr)\) is continuous, and the existence follows immediately from the Weierstrass theorem. 

We draw attention to the fact that pointers \ref{thrm:point:a}--\ref{thrm:point:f} in the proof of Theorem \ref{thrm:exact:sols} are directly applicable, and the joint continuity of the constraints mappings were shown in the preceding paragraph. Appealing to \cite[Theorem \(1\)]{ref:DasAraCheCha-22} we have  
\begin{equation*}         \valuefunc(\xz,\ol{\eps})=\gfunc_p\bigl(\xz,\regupara;\tseq^{\ast}(\xz,\regupara),\bseq^{\ast}(\xz,\regupara),\pseq^{\ast}(\xz,\regupara)\bigr),
\end{equation*} 
and applying \cite[Proposition 3.3]{ref:ParCha-23} we get convergence of optimizers as \(\regupara \downarrow 0\). The proof is complete. 
\end{proof}

\end{document}